\documentclass{amsart}
\usepackage{amsmath,amsfonts,amssymb,amsthm}
\usepackage[utf8]{inputenc}
\usepackage[T1]{fontenc}
\usepackage{a4wide}
\usepackage{enumitem}
\usepackage{multirow}
\usepackage{booktabs}
\usepackage{url}
\usepackage[dvipsnames]{xcolor}
\usepackage{comment}
\excludecomment{comment}
\usepackage{fourier}
\DeclareMathAlphabet{\mathcalzp}{OMS}{zplm}{m}{n}
\usepackage{tikz,tikz-cd}
\usetikzlibrary{arrows,arrows.meta}
\tikzcdset{arrow style=tikz, diagrams={>=latex}}
\usetikzlibrary{calc}

\specialcomment{commentDRK}{\begingroup \color{MidnightBlue}}{\endgroup}
\specialcomment{remarkDRK}{\begingroup \color{Aquamarine}\it }{\endgroup}

\newcommand{\cC}{\mathcal{C}}

\newcommand{\cE}{\mathcal{E}}
\newcommand{\cF}{\mathcal{F}}
\newcommand{\cO}{\mathcal{O}}

\newcommand{\cX}{\mathcal{X}}
\newcommand{\cY}{\mathcal{Y}}
\newcommand{\cW}{\mathcal{W}}

\renewcommand{\AA}{\mathbb{A}}

\newcommand{\FF}{\mathbb{F}}
\newcommand{\GG}{\mathbb{G}}
\newcommand{\MM}{\mathbb{M}}

\newcommand{\PP}{\mathbb{P}}
\newcommand{\QQ}{\mathbb{Q}}
\newcommand{\ZZ}{\mathbb{Z}}

\newcommand{\XC}{\mathcal{X}}

\newcommand{\GL}{\mathrm{GL}}
\newcommand{\SL}{\mathrm{SL}}
\newcommand{\PSL}{\mathrm{PSL}}

\newcommand{\isom}{\cong}
\newcommand{\dsp}{\displaystyle}

\newcommand{\ff}{\mathfrak{f}}
\newcommand{\uu}{\mathfrak{u}}

\newcommand{\Group}{G}

\newcommand{\ccomma}{\raisebox{0.4ex}{,}}

\newcommand{\mmu}{\boldsymbol{\mu}}

\newtheorem{theorem}{Theorem}
\newtheorem{corollary}[theorem]{Corollary}

\newtheorem{proposition}[theorem]{Proposition}

\newtheorem*{theorem*}{Theorem}
\newtheorem*{definition*}{Definition}

\makeatletter
\newcommand{\leqnomode}{\tagsleft@true\let\veqno\@@leqno}
\newcommand{\reqnomode}{\tagsleft@false\let\veqno\@@eqno}

\makeatother

\makeatletter
\newcommand*{\relrelbarsep}{.264ex}
\newcommand*{\relrelbar}{%
  \mathrel{%
    \mathpalette\@relrelbar\relrelbarsep
  }%
}
\newcommand*{\@relrelbar}[2]{%
  \raise#2\hbox to 0pt{$\m@th#1\relbar$\hss}%
  \lower#2\hbox{$\m@th#1\relbar$}%
}
\providecommand*{\rightrightarrowsfill@}{%
  \arrowfill@\relrelbar\relrelbar\rightrightarrows
}
\providecommand*{\leftleftarrowsfill@}{%
  \arrowfill@\leftleftarrows\relrelbar\relrelbar
}
\providecommand*{\xrightrightarrows}[2][]{%
  \ext@arrow 0359\rightrightarrowsfill@{#1}{#2}%
}
\providecommand*{\xleftleftarrows}[2][]{%
  \ext@arrow 3095\leftleftarrowsfill@{#1}{#2}%
}
\makeatother

\begin{document}

\title{Weber modular curves and modular isogenies}
\author{Leonardo Col\`o and David Kohel}
\date{}
\reqnomode

\begin{abstract}
  We study the modular curves defined by Weber functions, and associated
  modular polynomials, action of $\SL_2(\ZZ)$, and parametrizations of
  elliptic curves with a view to the study of the isogeny graphs that
  they determine, particularly for supersingular elliptic curves.
  In addition to applications to efficient isogeny computation in
  cryptographic applications, we present an application to explicit
  Galois representations.
\end{abstract}

\maketitle

\section{Introduction}

We describe explicit models of modular curves defined by Weber functions,
with their parametrizations of families of elliptic curves.
These functions, appearing in Weber~\cite{Weber}, are modular functions
of level $48$, each of which describe a degree-$72$ covers of the
$j$-line by a genus-$0$ modular curve.
This degree is the largest known of any modular function covering the
$j$-line, and the properties of the resulting class polynomials and
modular polynomials make them attractive for explicit algorithms for
class groups and isogeny computations.  Collectively the functions
generate the function field of a high-genus quotient of $X(48)$
which admits many symmetries. 

Various cryptographic algorithms rely on the explicit computation of
modular isogenies: given an elliptic curve $E/\FF_q$ or its $j$-invariant
$j(E)$ and a prime $\ell$, one needs to determine one or all $\ell+1$
of the moduli of $\ell$-isogenous elliptic curves.
The data of an $\ell$-isogeny can be specified by the $x$-coordinates
of points in an $\ell$-torsion
subgroup~\cite{Velu71}, corresponding to a point on $X_1(\ell)$, or a
kernel polynomial vanishing on these points (see \cite[Chapter 2]{Kohel96}
and \cite{LercierMorain98}) associated to a point on $X_0(\ell)$.
Conversely, one can recover the isogeny data on $E$ from the associated
point on $X_0(\ell)$~\cite{Schoof95}.

Yui and Zagier~\cite{YuiZagier97} investigated Weber functions for
constructing small class invariants. These class polynomials exhibited
remarkably small coefficient size, asymptotically 72 times smaller
in height.
Gee~\cite{Gee99} developed the theoretical foundations of class invariants
for arbitrary modular functions, using Shimura reciprocity, and Enge and
Morain~\cite{EngeMorain02,EngeMorain09} investigated the height reduction
in terms of Dedekind eta and generalized Weber functions.
To date, no modular function is known which achieves the same height reduction,
and the factor 72 is conjecturally maximal.

In this work, we describe the modular polynomials, curves, and elliptic
curve parametrizations defined by the Weber functions.
The modular polynomials have the same advantage of height reduction,
as for class polynomials, in addition to a sparseness condition,
which makes them much smaller and efficient to use for isogeny computations. 
The curves defined by the triple of classical Weber functions (with a
suitable twist), which are skew conjugate under the action of $\SL_2(\ZZ)$.
We describe the position of this curve and its quotients in the Galois
cover from $X(48)$ to $X(1)$.
In particular, a quotient to the $8$-th Fermat curve gives an alternative
curve of level $16$ admitting a degree-$48$ cover of $X(1)$.
We provide explicit models for these curves, their morphisms and the
action of $\SL_2(\ZZ)$, as well as the elliptic curve parametrizations
and isogenies that they describe.

The level structure determined by Weber functions is particularly suited
to traversal of supersingular isogeny graphs.
The SIDH protocol~\cite{DeFeoJao} works with $j$-invariants of level~$1$,
but the CSIDH protocol already incorporates the level-$4$ structure of
a function $a$ on $X_1(4)$ such that $j = 256(a^2-3)^3/(a^2-4)$.
In addition to admitting small modular polynomials, the supersingular
points on the Weber curves split completely over $\FF_{p^2}$, exactly
as for the $j$-invariants.  No further base extension is required to
split the supersingular isogenies or to define their isogenies.

In addition to cryptographic applications, the structure of supersingular
isogeny graphs permits one to compute representations associated to spaces
of modular forms, in which the level structure of the invariants used,
and the characteristic of the base field $\FF_{p^2}$, controls the level
of the representations.
Mestre's method of graphs~\cite{Mestre86} gives a construction for Galois
representations of prime level $p$ or level $m p$ where $m$ is one
of the levels $m$ in $\{2,3,4,5,7,13\}$ for which the modular curve 
$X_0(m)$ is of genus $0$.  We conclude with examples of this application
of Weber curves to the study of Galois representations, particularly
associated to modular elliptic curves with prescribed ramification
at $2$ and $3$.

\section{Weber functions and modular polynomials}
\label{Section:WeberModuli}

Let $\XC_\Group/\FF_p$ be a modular curve associated to level $N$ open subgroup
$\Group \subseteq \GL_2(\hat{\ZZ})$, and $\ell$ prime to $N$.
The modular $\ell$-isogeny supersingular graph is determined by a set of supersingular
moduli points on $\XC_\Group$, with edge relations given by points in the correspondence,
$$
\XC_\Group(B_0(\ell)) \xrightrightarrows[\hspace{5mm}]{\hspace{5mm}} \XC_\Group.
$$
When $\ell$ is not coprime to $N$, we replace the above correspondence with
$$
\XC_\Group(B_0(\ell^t)) \xrightrightarrows[\hspace{5mm}]{\hspace{5mm}} \XC_\Group,
$$
where $t$ is the minimal integer such that $\Group \not\subseteq B_0(\ell^t)$.

One advantage of working with a modular curve $\XC_\Group$ of higher level, still of genus $0$,
is that the logarithmic coefficient size of the modular polynomials $\Phi_\ell(x,y)$ with
respect to a degree one function is reduced by a factor of the degree $\XC_\Group \to X(1)$.
If we consider the smallest correspondence, for $\ell = 2$, for the $j$-invariant we
have the modular polynomial
$$
\begin{array}{c}
  x^3 - x^2 y^2 + y^3 + 1488 x^2 y + 1488 x y^2 - 162000 x^2 - 162000 y^2 + 40773375 x y\\
  +\ 8748000000 x + 8748000000 y - 157464000000000.
\end{array}
$$
In comparison, for the group $\Gamma_0(3)$ of index $4$, this becomes:
$$
x^3 - x^2 y^2 - 24 x^2 y - 24 x y^2 - 729 x y + y^3
$$
and for the full congruence subgroup $\Gamma(3)$ of index $12$ we have:
$$
x^3 - x^2 y^2 + 9 x y + y^3 - 54
$$
It is worth noting that the sparsity of monomials of the latter polynomial
can be explained by the transformation 
$$
\Phi_\ell(\zeta_3 x,\zeta_3^\ell y) = \zeta_3^{\ell+1} \Phi_\ell(x,y),
$$
of the family of modular polynomials with respect to the particular normalized
modular function on $\XC_\Group = \XC(3)$.  In particular, only the monomials $x^iy^j$
satisfying $i + \ell j \equiv \ell+1 \bmod 3$ can occur.
In the next section we derive similar results for the modular polynomials
defined in terms of Weber functions.

\subsection*{Weber modular polynomials}

The best known reduction in coefficient size as well as in sparsity of coefficients
is obtained for the Weber function $\ff$ of level~$48$,
$$
\ff(\tau) = \zeta_{48}^{-1} \frac{\eta\left(\frac{\tau+1}{2}\right)}{\eta(\tau)},
$$
which generates a degree-$72$ cover of the $j$-line, given by
$$
j = \frac{(\ff^{24}-16)^3}{\ff^{24}}.
$$
The modular polynomials with respect to $\ff$ are the integral polynomials
$\Phi_\ell(x,y)$ such that
$$
\Phi_\ell(\ff(\tau),\ff(\ell\tau)) = 0.
$$
Although the Weber function does not generate the full modular curve $X(48)$,
which has genus $2689$, it still satisfies a transformation giving the following
symmetry of its induced modular polynomials.

\begin{proposition}
  \label{prop:Weber_trans}
  The modular polynomial $\Phi_\ell(x,y)$ of prime level $\ell$ with respect to the Weber
  function satisfies the transformation:
  $$
  \Phi_\ell(\zeta_{24} x,\zeta_{24}^\ell y) = \zeta_{24}^{\ell+1} \Phi_\ell(x,y),
  $$
  with respect to a primitive $24$-th root of unity $\zeta_{24}$.
\end{proposition}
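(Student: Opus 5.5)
Use the transformation of $\ff$ under the translation $T:\tau\mapsto\tau+1$. From the $q$-expansion
$$
\ff(\tau)=q^{-1/48}\prod_{n\ge1}\bigl(1+q^{n-1/2}\bigr),\qquad q=e^{2\pi i\tau},
$$
one gets $\ff(\tau+1)^{24}=q^{-1/2}\prod(1-q^{n-1/2})^{24}\cdots$. This is not simply a root of unity times $\ff$: $T$ sends $\ff$ to $\zeta_{48}^{-1}\ff_1$. The translation by $2$, however, does act by a root of unity: $\ff(\tau+2)=\zeta_{24}^{-1}\ff(\tau)$, since the product is invariant under $q^{1/2}\mapsto -q^{1/2}$ applied twice and $q^{-1/48}$ picks up $e^{-2\pi i\cdot 2/48}$. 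Because $\ell$ is odd (for $\ell=2,3$ one has to be careful, but $\ell=3$ is fine and $\ell=2$ is excluded as the argument is not stated for it), we have $\ell(\tau+2)=\ell\tau+2\ell$. Hence
$$
\ff\bigl(\ell(\tau+2)\bigr)=\zeta_{24}^{-\ell}\ff(\ell\tau).
$$
Substituting $\tau\mapsto\tau+2$ in $\Phi_\ell(\ff(\tau),\ff(\ell\tau))=0$ therefore gives
$$
\Phi_\ell\bigl(\zeta\,\ff(\tau),\zeta^{\ell}\ff(\ell\tau)\bigr)=0,\qquad \zeta=\zeta_{24}^{-1}.
$$
Iterating, and using Galois conjugation over $\QQ$ (the coefficients of $\Phi_\ell$ are integers), this holds for every primitive $24$-th root of unity $\zeta$.

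Next, $\Phi_\ell(x,\ff(\ell\tau))$ is, up to scalar, the minimal polynomial of $\ff(\tau)$ over $\CC(\ff(\ell\tau))$; equivalently, $\Phi_\ell$ is irreducible in $\QQ(\zeta_{24})[x,y]$. I would need this irreducibility, which follows from transitivity of $\Gamma$ on the cosets defining the curve $\XC_\Group(B_0(\ell))$. Given it, the polynomial $\Psi(x,y)=\Phi_\ell(\zeta x,\zeta^\ell y)$ vanishes on the irreducible curve $\Phi_\ell=0$ and has the same bidegree. So $\Psi=c\,\Phi_\ell$ for some constant $c$.

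To determine $c$, compare a specific monomial. I expect $\Phi_\ell$ to contain the monomial $x^{\ell+1}$ with coefficient $\pm1$, coming from the leading behaviour $\ff(\tau)\sim q^{-1/48}$ versus $\ff(\ell\tau)\sim q^{-\ell/48}$; alternatively, use the symmetric term $y^{\ell+1}$. This gives $c=\zeta^{\ell+1}$. Finally, replace $\zeta_{24}^{-1}$ by an arbitrary primitive $\zeta_{24}$, which is allowed by the Galois argument above.

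The main obstacle is establishing the shape of $\Phi_\ell$ precisely enough to read off $c$: that it has degree $\ell+1$ in each variable, and that $x^{\ell+1}$ appears with nonzero coefficient. I would get this from the $\ell+1$ conjugates $\ff(\ell\tau),\ff((\tau+48k)/\ell)$ (after appropriate normalization) by forming $\prod(y-\ff\circ\gamma_k)$, and reading off the constant term in $y$ as a product of the conjugates. Its leading $q$-power then shows it is a monic-up-to-sign polynomial of degree $\ell+1$ in $x$. Irreducibility is the other delicate point, but it is standard for modular polynomials, by the same argument used for $j$.
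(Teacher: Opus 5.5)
The paper states this proposition without proof. The only ingredient it supplies is the $T$-action in Proposition~\ref{prop:Weber_group_action}, which gives $\uu_0\circ T^2=\zeta_{24}^{-1}\uu_0$, i.e.\ $\ff(\tau+2)=\zeta_{24}^{-1}\ff(\tau)$. That is exactly your key step. Your outline is the natural proof and is sound for $\ell\ge 5$: substitute $\tau\mapsto\tau+2$, use minimality of $\Phi_\ell$ to get $\Phi_\ell(\zeta x,\zeta^\ell y)=c\,\Phi_\ell(x,y)$, then read $c$ off one monomial. Two corrections, though.

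\emph{The case $\ell=3$.} Your claim that $\ell=3$ is fine is false, and oddness is not the relevant hypothesis. The identity $\ff(\ell\tau+2\ell)=\zeta_{24}^{-\ell}\ff(\ell\tau)$ holds for every integer $\ell$. What the rest of your argument actually uses is $\gcd(\ell,48)=1$, so that $\ff(\ell\tau)$ has exactly $\ell+1$ conjugates over $\CC(\ff(\tau))$ and $\Phi_\ell$ has bidegree $(\ell+1,\ell+1)$. For $\ell=3$, the paper's own $\Phi_3(x,y)=\Psi_3(x^3,y^3)=x^{12}-x^9y^9+8x^3y^3+y^{12}$ has bidegree $(12,12)$. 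It satisfies $\Phi_3(\zeta_{24}x,\zeta_{24}^3y)=\zeta_{24}^{12}\Phi_3(x,y)\neq\zeta_{24}^{4}\Phi_3(x,y)$. Your argument, applied with the correct bidegree, yields $c=\zeta^{12}$ there, so the statement must be read for $\ell\ge 5$.

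\emph{The main obstacle.} The step you flag needs one more input than the leading $q$-power. Write $\Phi_\ell=a(x)\prod_k(y-y_k)$.
\begin{itemize}
\item The product $\prod_k y_k$ is indeed $C\,x^{\ell+1}$. It is a rational function of $x=\ff$ with no zeros or poles away from the cusps $x\in\{0,\infty\}$, since the $y_k$ are eta quotients, and its $q$-expansion starts at $q^{-(\ell+1)/48}$.
\item However, the $y$-leading coefficient $a(x)$ could a priori be $x^r$ with $r>0$, coming from conjugates with a pole at the cusp $\ff=0$. The top monomial of $\Phi_\ell(x,0)=\pm a(x)\prod_k y_k$ would then be $x^{r+\ell+1}$, giving $c=\zeta^{r+\ell+1}$.
\item To exclude this, use $\ff(-1/\tau)=\ff(\tau)$. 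Substituting $\tau\mapsto -1/(\ell\tau)$ gives $\Phi_\ell(\ff(\ell\tau),\ff(\tau))=0$. Hence $\Phi_\ell(y,x)=\pm\Phi_\ell(x,y)$ and $\deg_x\Phi_\ell=\deg_y\Phi_\ell=\ell+1$, which forces $r=0$.
\end{itemize}
Then both $x^{\ell+1}$ and $y^{\ell+1}$ occur. They give the same $c=\zeta^{\ell+1}$ because $\ell^2\equiv 1 \bmod 24$ for $\ell\ge 5$.
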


\noindent
This gives the following sparsity result for the coefficients of the Weber modular polynomials.

\begin{corollary}
  \label{cor:Weber_sparcity}
  The coefficient of the monomial $x^iy^j$ in the Weber modular polynomial
  $\Phi_\ell(x,y)$ is nonzero only if $i+\ell j \equiv \ell+1 \bmod 24$.
\end{corollary}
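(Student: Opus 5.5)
The corollary follows directly from Proposition~\ref{prop:Weber_trans} by comparing coefficients. Write
$$
\Phi_\ell(x,y) = \sum_{i,j} c_{ij}\, x^i y^j, \qquad c_{ij} \in \ZZ.
$$
Substituting $(\zeta_{24}x, \zeta_{24}^\ell y)$ multiplies each monomial by a root of unity:
$$
\Phi_\ell(\zeta_{24} x, \zeta_{24}^\ell y) = \sum_{i,j} c_{ij}\, \zeta_{24}^{\,i+\ell j}\, x^i y^j .
$$
By the proposition, this equals $\zeta_{24}^{\ell+1}\Phi_\ell(x,y) = \sum_{i,j} c_{ij}\,\zeta_{24}^{\ell+1} x^i y^j$.

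Monomials $x^iy^j$ are linearly independent over $\QQ(\zeta_{24})$. Hence for every pair $(i,j)$,
$$
c_{ij}\bigl(\zeta_{24}^{\,i+\ell j} - \zeta_{24}^{\,\ell+1}\bigr) = 0 .
$$
Since $\zeta_{24}$ is a primitive $24$-th root of unity, $\zeta_{24}^a = \zeta_{24}^b$ holds if and only if $a \equiv b \bmod 24$. So if $i+\ell j \not\equiv \ell+1 \bmod 24$, the bracket is a nonzero element of a field, and therefore $c_{ij}=0$. This is exactly the claimed sparsity condition.

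There is no real obstacle here. All the substance lies in Proposition~\ref{prop:Weber_trans}, which we are allowed to assume. The only point needing care is that the proposition is used with a \emph{primitive} $24$-th root of unity, so that the exponent congruence is genuinely modulo $24$ rather than modulo a proper divisor of $24$.
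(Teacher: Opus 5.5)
Your argument is correct and is exactly the (implicit) derivation in the paper: the corollary is read off by comparing coefficients of $x^iy^j$ on both sides of the identity in Proposition~\ref{prop:Weber_trans}, and primitivity of $\zeta_{24}$ is what makes the congruence hold modulo $24$. One caveat concerns the proposition's scope rather than your deduction: the proposition, and hence the corollary, implicitly requires $\ell$ prime to the level (so $\ell \ge 5$), since for instance the paper's $\Phi_3(x,y) = x^{12} - x^9y^9 + 8x^3y^3 + y^{12}$ contains $x^{12}$ with $12 \not\equiv 4 \bmod 24$.
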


Asymptotically, modular polynomials have $(\ell+1)^2$ monomials, but due
to the sparseness of the Weber polynomials the number of nonzero
coefficients is of the order of $(\ell+1)^2/24$.
Combined with the height reduction of the coefficients this makes the
Weber modular polynomials attractive for constructing isogeny invariants.
The first few examples are as follows.
$$
\begin{array}{l}
\Phi_5(x,y) = x^6 - x^5 y^5 + 4 x y + y^6\\
\Phi_7(x,y) = x^8 - x^7 y^7 + 7 x^4 y^4 - 8 x y + y^8\\
\Phi_{11}(x,y) = x^{12} - x^{11} y^{11} + 11 x^9 y^9 - 44 x^7 y^7 + 88 x^5 y^5 - 88 x^3 y^3 + 32 x y + y^{12}\\
\Phi_{13}(x,y) = x^{14} - x^{13} y^{13} + 13 x^{12} y^2 + 52 x^{10} y^4 + 78 x^8 y^6 + 78 x^6 y^8 + 52 x^4 y^{10} + 13 x^2 y^{12} + 64 x y + y^{14}
\end{array}
$$
Going further, the modular polynomial $\Phi_{71}(x,y)$ has exactly $3\cdot 71 = 213$
nonzero coefficients, ignoring the symmetry $\Phi_\ell(x,y) = \Phi_\ell(y,x)$, which
implies an even smaller number of distinct coefficients.

In the interest of constructing $\ell$-isogeny chains, especially for $\ell = 2$
or $\ell = 3$, we note that the $48$-level structure gives the modular polynomials
$\Phi_2(x,y)$ and $\Phi_3(x,y)$ a particular form.  We descend the $2$-level
structure by setting $t = -\ff^8$, so that
$$
j = \left(\frac{t^3+16}{t}\right)^3
$$
With respect to this function, we obtain the modular polynomial: 
$$
\Psi_2(x,y) = (x^2 - y)y + 16x
$$
and the Weber modular polynomial $\Phi_2(x,y) = -\Psi_2(-x^8,-y^8)$ remains
irreducible.\footnote{
More correctly, the modular polynomial $\Psi_2(x,y)$ satisfies
$\Psi_2(\ff_1^8(\tau),\ff_1^8(2\tau)) = 0$, where $\ff_1$ is the conjugate Weber function
$$
\ff_1^8(\tau) = -\ff(\tau+3)^8 = \left(\frac{\eta(\frac{\tau}{2})}{\eta(\tau)}\right)^8
$$
and hence $\Psi_2(\ff^8(\tau),\ff^8(2\tau-3)) = 0$.  Nevertheless, this modular relation
describes a $2$-isogeny relation of the underlying curves, extending the parametrized
$2$-isogeny to a $4$-isogeny, and will be used for defining our $2$-isogeny chains.}
A similar descent of the $3$-level to the function $r = \ff^3$, gives the modular polynomial
$$
\Psi_3(x,y) = x^4 - x^3 y^3 + 8 x y + y^4,
$$
such that $\Psi_3(r(\tau),r(3\tau)) = 0$, for which $\Phi_3(x,y) = \Psi_3(x^3,y^3)$
is irreducible.  For a given supersingular Weber invariant, these relations determine
orbits under multiplication by $\zeta_8$ or $\zeta_3$, but in view of the global
relation of Proposition~\ref{prop:Weber_trans}, the lift to the orbit can be chosen to be
compatible with isogeny relations of other prime degrees.

\subsection*{Weber modular functions}
The definition of a modular polynomial requires a genus~$0$ modular curve or
fixed function on a modular curve.  In order to understand the transformation
properties of this function, one needs to study its images under the images
of transformations by the generators of $\SL_2(\ZZ)$.  

The classically defined triple of Weber functions,
$$
\ff(\tau) = \zeta_{48}^{-1} \frac{\eta\left(\frac{\tau+1}{2}\right)}{\eta(\tau)},
\quad
\ff_1(\tau) = \frac{\eta\left(\frac{\tau}{2}\right)}{\eta(\tau)},
\quad
\ff_2(\tau) = \sqrt{2}\cdot\frac{\eta(2\tau)}{\eta(\tau)},
$$
are modular functions, the first two with rational integral $q$-expansions
in $q^{1/48} = e^{2\pi{i}\tau/48}$, and the third with $q$-expansion in 
$q^{1/24}$ with coefficients in $\sqrt{2}\ZZ$.
Each satisfy the relations
$$
j = \frac{(\ff^{24}-16)^3}{\ff^{24}}
  = \frac{(\ff_1^{24}+16)^3}{\ff_1^{24}}
  = \frac{(\ff_2^{24}+16)^3}{\ff_2^{24}}\ccomma
$$
on which the generators $S$ and $T$ of $\SL_2(\ZZ)$ induce (see Yui and
Zagier~\cite{YuiZagier97}, Gee~\cite{Gee99}):
\begin{equation}
\label{Weber_transform:Gee}
(\ff,\ff_1,\ff_2) \circ S = (\ff,\ff_2,\ff_1)  \mbox{ and }
(\ff,\ff_1,\ff_2) \circ T = (\zeta_{48}^{-1}\ff_1,\zeta_{48}^{-1}\ff,\zeta_{48}^2\ff_2).
\end{equation}
It is well-known that the Weber functions satisfy
$
\ff^8 = \ff_1^8 + \ff_2^8,
$
and based on the identity
$$
\zeta_{48}^{-1}\eta\left(\frac{\tau+1}{2}\right)\eta\left(\frac{\tau}{2}\right)\eta(2\tau) = \eta(\tau)^3
$$
it follows from the definition of the Weber functions that $\ff\, \ff_1 \ff_2 = \sqrt{2}$.

Setting
$(\uu_0,\uu_1,\uu_2) = (\ff,\zeta_{16}\ff_1(\tau),\zeta_{16}^{-1}\ff_2(\tau))$
the triple $(\uu_0,\uu_1,\uu_2)$ satisfies the common relations
$$
j = \frac{(\uu_0^{24}-16)^3}{\uu_0^{24}} = \frac{(\uu_1^{24}-16)^3}{\uu_1^{24}} = \frac{(\uu_2^{24}-16)^3}{\uu_2^{24}}\ccomma
$$
and one verifies that the three orbits $\{ \zeta_{24}^j \uu_i \;:\; j \in \ZZ/24\ZZ \}$, for $i \in \ZZ/3\ZZ$, 
run over the $72$ roots of the modular polynomial
$$
(x^{24} - 16)^3 - j(q) x^{24}
$$
in $\QQ(\zeta_{48})[\![q^{1/48}]\!]$.  The proposition which follows summarizes the action
of the modular group on the normalized Weber functions.

\begin{proposition}
  \label{prop:Weber_group_action}
The action of $\PSL_2(\ZZ) = \langle{S,T}\rangle$ on Weber triples $(\uu_0,\uu_1,\uu_2)$
maps through the quotient $\SL_2(\ZZ/48\ZZ)/\{\pm1\}$, and is defined on generators by:
$$
(\uu_0,\uu_1,\uu_2) \circ S = (\uu_0,\zeta_{8}\uu_2,\zeta_{8}^{-1}\uu_1)  \mbox{ and }
(\uu_0,\uu_1,\uu_2) \circ T = (\zeta_{12}^{-1}\uu_1,\zeta_{24}\uu_0,\zeta_{24}\uu_2).
$$
In particular the elements $U = T^{-1}ST$, $V = T^{-2}ST^2$, and $W = ST^3$ act by permutations:
$$
(\uu_0,\uu_1,\uu_2) \circ U = (\uu_2,\uu_1,\uu_0),\quad
(\uu_0,\uu_1,\uu_2) \circ V = (\uu_0,\uu_2,\uu_1),\quad
(\uu_0,\uu_1,\uu_2) \circ W = (\uu_2,\uu_0,\uu_1).
$$
\end{proposition}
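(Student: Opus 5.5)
The plan is to derive everything from the classical transformation formulas \eqref{Weber_transform:Gee} for $(\ff,\ff_1,\ff_2)$ under $S$ and $T$. The substitution $(\uu_0,\uu_1,\uu_2)=(\ff,\zeta_{16}\ff_1,\zeta_{16}^{-1}\ff_2)$ is a constant rescaling, and composition with $S$ or $T$ acts only on the function, not on the constants. Under $S$ we get
$$
\uu_0\circ S=\ff=\uu_0,\qquad \uu_1\circ S=\zeta_{16}\ff_2=\zeta_{16}^2\uu_2=\zeta_8\uu_2,\qquad \uu_2\circ S=\zeta_{16}^{-1}\ff_1=\zeta_{16}^{-2}\uu_1=\zeta_8^{-1}\uu_1 .
$$
Under $T$ we get
$$
\uu_0\circ T=\zeta_{48}^{-1}\ff_1=\zeta_{48}^{-1}\zeta_{16}^{-1}\uu_1=\zeta_{48}^{-4}\uu_1=\zeta_{12}^{-1}\uu_1,
$$
$$
\uu_1\circ T=\zeta_{16}\zeta_{48}^{-1}\ff=\zeta_{48}^{2}\uu_0=\zeta_{24}\uu_0,\qquad
\uu_2\circ T=\zeta_{16}^{-1}\zeta_{48}^{2}\ff_2=\zeta_{48}^{2}\uu_2=\zeta_{24}\uu_2 .
$$
This gives the generator formulas. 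Throughout, I will treat this as a right action: $(\uu\circ A)\circ B=\uu\circ(AB)$, with constants passing through.

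For the permutation statements, I compose these formulas, tracking the roots of unity. For $U=T^{-1}ST$, I first invert $T$: from the formulas, $\uu_1\circ T^{-1}=\zeta_{24}^{-1}\cdot$ the preimage, namely $\uu_0\circ T^{-1}=\zeta_{24}^{-1}\uu_1$, $\uu_1\circ T^{-1}=\zeta_{12}\uu_0$, and $\uu_2\circ T^{-1}=\zeta_{24}^{-1}\uu_2$. Then I apply $S$, then $T$. For example,
$$
\uu_0\circ U=\zeta_{24}^{-1}(\uu_1\circ ST)=\zeta_{24}^{-1}\zeta_8(\uu_2\circ T)=\zeta_{24}^{-1}\zeta_8\zeta_{24}\uu_2=\zeta_8\uu_2 .
$$
So a leftover constant appears, and I must recheck the composition convention. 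With the opposite convention, $U$ is effectively applied as $T$ first, then $S$, then $T^{-1}$:
$$
\uu_0\mapsto \zeta_{12}^{-1}\uu_1\mapsto\zeta_{12}^{-1}\zeta_8\uu_2\mapsto \zeta_{12}^{-1}\zeta_8\zeta_{24}^{-1}\uu_2=\uu_2,
$$
since $-4+6-2=0$ in units of $\zeta_{48}$. So the intended convention is the one under which $\uu\circ(AB)$ means applying $A$'s substitution to the result of $B$'s. I will fix this convention at the start, consistent with the slash action $f\circ\gamma=f(\gamma\tau)$, and then compute all three words $U,V,W$ uniformly. The remaining entries are the same kind of bookkeeping of exponents modulo $48$. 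For $V=T^{-2}ST^2$ there are two factors of $T$ on each side. For $W=ST^3$, three applications of $T$ cycle $\uu_0\leftrightarrow\uu_1$ with $\uu_2$ fixed up to $\zeta_{24}^3=\zeta_8$, which the $\zeta_8^{\pm1}$ from $S$ must cancel. I expect the main obstacle to be exactly this consistency of convention and the cancellation of the $\zeta_{48}$ exponents, which is routine but error-prone.

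Finally, for factoring through $\SL_2(\ZZ/48\ZZ)/\{\pm1\}$: the $\uu_i$ are modular functions of level $48$. Each $\uu_i^{24}$ is a root of $(x^{24}-16)^3-jx^{24}$, and $\ff$ is invariant under $\Gamma(48)$ by the standard theory (Gee, Yui–Zagier). Hence $\Gamma(48)$ and $-1$ act trivially, and since $\SL_2(\ZZ)\to\SL_2(\ZZ/48\ZZ)$ is surjective, the action descends. As a consistency check, the generator formulas should satisfy the relations $S^2=1$ and $(ST)^3=1$ in $\PSL_2$. For instance, $S^2$ sends $\uu_1\mapsto\zeta_8\zeta_8^{-1}\uu_1=\uu_1$, and $(ST)^3$ can be checked similarly by exponent bookkeeping.
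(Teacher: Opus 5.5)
Your route is the paper's: the $S$ and $T$ formulas come from \eqref{Weber_transform:Gee} via the constant rescaling, $U,V,W$ come from composition, and the factorization through $\SL_2(\ZZ/48\ZZ)/\{\pm1\}$ comes from the level of the $\eta$-quotients. Your generator computations are correct. The genuine problem is how you resolve the convention.

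For the slash action $f\circ\gamma(\tau)=f(\gamma\tau)$, which is the convention in which \eqref{Weber_transform:Gee} holds, one has $(f\circ A)\circ B=f\circ(AB)$. So the substitution formula of the \emph{leftmost} letter is applied first. That is exactly your first computation, and it was correct. For the slash action one gets $(\uu_0,\uu_1,\uu_2)\circ U=(\zeta_8\uu_2,\uu_1,\zeta_8^{-1}\uu_0)$, $(\uu_0,\uu_1,\uu_2)\circ V=(\uu_0,\zeta_4\uu_2,\zeta_4^{-1}\uu_1)$ and $(\uu_0,\uu_1,\uu_2)\circ W=(\zeta_8^{-1}\uu_1,\zeta_4\uu_2,\zeta_8^{-1}\uu_0)$. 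None of these is a permutation.

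The rule you then adopt (apply the rightmost letter's formula first) is \emph{not} consistent with the slash action, contrary to what you assert. You have conflated the order in which the factors of $\gamma$ act on $\tau$ with the order in which the transformation formulas are applied. Your rule is $(\uu_0,\uu_1,\uu_2)\circ(AB)=(\uu_0,\uu_1,\uu_2)\,\iota(A)\iota(B)$. This is the rule under which the paper's $\iota$ is a homomorphism, and it is what the paper's one-line ``follow from the actions of $S$ and $T$'' implicitly uses. In slash terms it is $f\mapsto f\circ\theta(\gamma)$, where $\theta\bigl(\begin{smallmatrix}a&b\\c&d\end{smallmatrix}\bigr)=\bigl(\begin{smallmatrix}d&b\\c&a\end{smallmatrix}\bigr)$ is the anti-automorphism of $\SL_2(\ZZ)$ fixing $S$ and $T$, i.e.\ word reversal.

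So you must choose one of two fixes.
(i) Take the twisted action as the definition. It is well defined on $\PSL_2(\ZZ)$, since directly $\iota(S)^2=(\iota(S)\iota(T))^3=I$. It still factors through $\SL_2(\ZZ/48\ZZ)/\{\pm1\}$ because $\theta$ preserves $\pm\Gamma(48)$. With this definition your bookkeeping for $U$, $V$, $W$ goes through (the exponents do cancel) and reproduces the paper.
(ii) Keep the slash action and replace the words by $\theta(U)=TST^{-1}$, $\theta(V)=T^2ST^{-2}$, $\theta(W)=T^3S$. These do act by $(\uu_2,\uu_1,\uu_0)$, $(\uu_0,\uu_2,\uu_1)$, $(\uu_2,\uu_0,\uu_1)$.
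What you cannot do is assert the slash action together with the words as written; that combination is false.

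Two minor slips. First, it is $\uu_i$, not $\uu_i^{24}$, that is a root of $(x^{24}-16)^3-jx^{24}$. Second, for the level-$48$ claim you need all three $\uu_i$ to be $\Gamma(48)$-invariant, not just $\ff$. This follows from normality of $\Gamma(48)$, since $\uu_1=\zeta_{12}\,\ff\circ T$ and $\uu_2=\zeta_{24}^{-1}\,\ff\circ TS$.
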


\begin{proof}
  The quotient via $\SL_2(\ZZ/48\ZZ)$ follows from the definition of the Weber functions,
  as $\eta$ quotients of level $48$.
  From the definition $(\uu_0,\uu_1,\uu_2) = (\ff,\zeta_{48}^3\ff_1(\tau),\zeta_{48}^{-3}\ff_2(\tau))$
  and the transformations \eqref{Weber_transform:Gee}, we obtain:
$$
  (\uu_0,\uu_1,\uu_2) \circ S 
  = (\ff,\zeta_{48}^3\ff_2,\zeta_{48}^{-3}\ff_1) 
  = (\uu_0,\zeta_{48}^6\uu_2,\zeta_{48}^{-6}\uu_1),
$$
and
$$
  (\uu_0,\uu_1,\uu_2) \circ T 
  = (\zeta_{48}^{-1}\ff_1,\zeta_{48}^2\ff,\zeta_{48}^{-1}\ff_2)
  = (\zeta_{24}^{-2}\uu_1,\zeta_{24}\uu_0,\zeta_{24}\uu_2).
$$
The permutation actions of $U$, $V$, and $W$ follow from the actions of $S$ and $T$.
\end{proof}

\noindent{\bf N.B.}
The right action on Weber triples gives a homomorphism $\iota: \PSL_2(\ZZ) \to \GL_3(\QQ(\zeta_{24}))$,
defined on generators by
$$
\iota(S)
= \left(\begin{array}{@{}ccc@{}}
1 & 0 & 0\\
0 & 0 & \zeta_{8}^{-1}\\
0 & \zeta_{8} & 0
\end{array}\right)\ccomma
\quad
\iota(T)
= \left(\begin{array}{@{}ccc@{}}
0 & \zeta_{24} & 0\\
\zeta_{24}^{-2} & 0 & 0\\ 
0 & 0 & \zeta_{24}
\end{array}\right)\cdot
$$
In the next section we describe the image as the automorphism group of a Galois cover $\cW_{24} \to X(1)$
of the $j$-line, of order $1152$ whose kernel $\Gamma_{24}$ contains $\PSL_2(\ZZ/48\ZZ)$ of index $32$.
Since $\PSL_2(\ZZ)$ acts by skew permutations, its image $G = \iota(\PSL_2(\ZZ))$ lies in the subgroup
$S_3 \ltimes \mmu_{24}^3 \subset \GL_3(\QQ(\zeta_{24}))$, and there exists an exact sequence
$$
1 \longrightarrow D \longrightarrow G \longrightarrow S_3 \longrightarrow 1.
$$
where $D = G \cap \mmu_{24}^3$ is the subgroup of diagonal matrices.  By the existence of the permutation
subgroup $S_3 = \langle{U,V,W}\rangle \subset G$, the sequence splits, from which the corollary follows.

\begin{corollary}
The subgroup $D$ of diagonal matrices in $G = \iota(\PSL_2(\ZZ))$ is a normal subgroup,
generated by the order $24$ matrices:
$$
\iota(T)^2
= \left(\begin{array}{@{}ccc@{}}
      \zeta_{24}^{-1}  &  0      &   0\\
         0  &   \zeta_{24}^{-1}  &   0\\
         0  &        0   & \zeta_{24}^{2}
\end{array}\right)
\quad\mbox{and}\quad
\iota(STS)^2 = \left(\begin{array}{@{}ccc@{}}
      \zeta_{24}^{-1}  &  0       &   0\\
         0 & \zeta_{24}^{2} &   0\\
         0        &  0   & \zeta_{24}^{-1}
\end{array}\right)\ccomma
$$
subject to the relation $\iota(T)^{16} = \iota(STS)^{16} = \zeta_3^{-1} I$.
In particular $D \subset \mmu_{24}^3 \subset \GL_3(\QQ(\zeta_{24}))$ is
abelian of order $3 \cdot 8^2 = 192$.  The quotient $G/D$ is isomorphic to
the symmetric group $S_3$, acting by permutation on the Weber functions
$\{\uu_0^{24},\uu_1^{24},\uu_2^{24}\}$, and $G \isom S_3 \ltimes D$.
\end{corollary}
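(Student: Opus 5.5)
The plan is to work entirely inside the image $G=\iota(\PSL_2(\ZZ))$ described in the N.B. above. I will write $z=\zeta_{24}$ and let $\pi: S_3\ltimes\mmu_{24}^3\to S_3$ be the map to the permutation part. Then $D=G\cap\mmu_{24}^3=\ker(\pi|_G)$, so $D$ is automatically normal in $G$. By Proposition~\ref{prop:Weber_group_action}, the elements $\iota(U),\iota(V),\iota(W)$ are genuine permutation matrices generating a copy of $S_3$ in $G$, so $\pi|_G$ is surjective and split. This already gives $G/D\isom S_3$ and $G\isom S_3\ltimes D$. The permutation action on $\{\uu_0^{24},\uu_1^{24},\uu_2^{24}\}$ is the image of this action, since the $\mmu_{24}$ factors die on $24$-th powers. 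What remains is to identify $D$ and its order.

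First I compute directly that $A:=\iota(T)^2=\mathrm{diag}(z^{-1},z^{-1},z^{2})$. Likewise $B:=\iota(STS)^2$ is the diagonal matrix in the statement, namely $A$ with its entries permuted. Let $H=\langle A,B\rangle\subseteq D$. The third permuted variant is $(AB)^{-1}=\mathrm{diag}(z^{2},z^{-1},z^{-1})$, so $H$ is stable under conjugation by all permutation matrices. Being diagonal, $H$ commutes with diagonal matrices, and hence $H$ is normal in $G$.

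Next I factor each generator as a permutation matrix times a diagonal matrix, and check that the diagonal factor lies in $H$:
\begin{itemize}
\item $\iota(S)$ is the transposition $(2\,3)$ times $\mathrm{diag}(1,z^{3},z^{-3})=A^{-1}B$;
\item $\iota(T)$ is the transposition $(1\,2)$ times a diagonal matrix with entries $\{z^{-2},z,z\}$ in some order, which is $AB$ up to a permutation of entries and so lies in $H$.
\end{itemize}
Hence $G=H\cdot S_3$, and $H\cap S_3=1$, which gives $D=H$. The bookkeeping for the ordering of diagonal entries under the right-action convention needs care, but the permutation-invariance of $H$ makes the exact ordering irrelevant.

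For the order, I write $A^xB^y=\mathrm{diag}(z^{-x-y},z^{-x+2y},z^{2x-y})$ and find the kernel of $(x,y)\mapsto A^xB^y$ on $\ZZ^2$. The conditions $y\equiv -x$ and $3x\equiv 0 \pmod{24}$ give the kernel $\{(x,y): 8\mid x,\ y\equiv -x \bmod 24\}$, of index $8\cdot 24=192$. So $D$ is abelian of order $192=3\cdot 8^2$, and each of $A,B$ has order $24$. The kernel contains $(8,-8)$, so $A^8=B^8$, and this common value is the scalar $\zeta_3^{-1}I$; squaring gives the relation among $16$-th powers. I expect this to be the main obstacle, only in the sense of getting the exponents of the stated relation right. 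My computation gives $A^8=B^8=\zeta_3^{-1}I$, hence $A^{16}=B^{16}=\zeta_3 I$, which differs from the statement's $\zeta_3^{-1}I$. I would double-check this against the paper's conventions for $\zeta_{24}$ and $\zeta_3$ before finalizing.
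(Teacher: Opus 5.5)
Your proof is correct and follows the same route as the paper's N.B.\ before the corollary: $D$ is the kernel of $G \to S_3$, and the permutation matrices $\iota(U),\iota(V),\iota(W)$ split the sequence. Beyond that, you supply what the paper leaves implicit, namely the monomial factorizations $\iota(S)=(2\,3)\cdot A^{-1}B$ and $\iota(T)=(1\,2)\cdot AB$ giving $D=\langle A,B\rangle$, and the kernel-lattice computation giving $|D|=192$.

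The discrepancy you flag at the end comes from a misreading, not from a convention issue. Since $A=\iota(T)^2$, the stated relation $\iota(T)^{16}=\iota(STS)^{16}=\zeta_3^{-1}I$ is exactly your $A^8=B^8=\zeta_3^{-1}I$ (with $\zeta_3=\zeta_{24}^8$), not a statement about $A^{16}$. So your computation confirms the corollary as written.
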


In the following section we consider the embeddings given by the normalized Weber functions,
and their quotients by subgroups $D[m] = D \cap \mmu_m^3 \subset \GL_3(\QQ(\zeta_{24}))$.

\section{Weber curves}

We are now able to use the conjugate Weber functions to define a curve with
projective embedding given by the functions $\uu_0$, $\uu_1$, and $\uu_2$.
Let $m$ and $n$ be integers such that $mn = 8$.
The map determined by the normalized Weber functions $(\uu_0^m:\uu_1^m:\uu_2^m:1)$
determines a {\it Weber modular curve} $\cW_{3n}$ in $\PP^3$
\begin{equation}
  \label{eqn:weber3n}
\cW_{3n}: \left\{
\begin{array}{l}
  X_0^n + X_1^n + X_2^n = 0,\\
  X_0 X_1 X_2 = \sqrt{2}^m X_3^3
\end{array}\right.
\end{equation}
with quotient Weber curve $\cW_{n}$ defined
as the image of $(\uu_0^{3m}:\uu_1^{3m}:\uu_2^{3m}:1)$ in $\PP^3$:
\begin{equation}
  \label{eqn:weber1n}
\mbox{\hspace{6mm}}
\cW_{n}: \left\{
\begin{array}{l}
  X_0^n + X_1^n + X_2^n = 48 X_3^n,\\
  X_0 X_1 X_2 = \sqrt{8}^{m} X_3^3.
\end{array}\right.
\end{equation}
These defining relations follow directly from the relations $\ff^8 = \ff_1^8 + \ff_2^8$
and $\ff\,\ff_1\ff_2 = \sqrt{2}$, and the curves are equipped with maps
$\cW_{mn} \to \cW_{n}$ for each product $mn$ dividing $24$.

The curves $\cW_n$ form Galois covers of the $j$-line $X(1)$.  In order to define modular
polynomials, as correspondences in $\PP^1 \times \PP^1$, we will work with the quotients
$\pi_i:\cW_n \to \cX_n \isom \PP^1$ determined by the projection $(u_0:u_1:u_2:1)
\mapsto (u:1) = (u_i:1)$, and equipped with the map to the $j$-line $\cX_n \to X(1)$
given by
$$
j(u) = \frac{(u^{n}-16)^3}{u^{n}}\cdot
$$
In particular, the curve $\cX_n$ parametrizes the family of elliptic curves
\begin{equation}
  \label{eqn:E0/Xn}
\cE_0/\cX_n: y^2 = x\left(x^2 - \frac{(u^n - 64)}{4}x - (u^n - 64)\right)\ccomma
\end{equation}
of $j$-invariant $j(u)$, discriminant $(u^n-64)^3u^{n}$, and $2$-torsion point $(0,0)$.
The quotient curve is
\begin{equation}
  \label{eqn:E1/Xn}
\cE_1/\cX_n: y^2 = x\left(x^2 + \frac{(u^n - 64)}{2}x + \frac{(u^n - 64)}{16}u^n\right)
\end{equation}
with $j$-invariant $-(u^n-256)^3/u^{2n}$, discriminant $-(u^n-64)^3u^{2n}$, and $2$-torsion
point $(0,0)$.

By eliminating the function $u_2$, it is clear that the cover $\cW_n \to \cX_n$ is of
degree $2n$, and the expression for $j$ shows that $\cX_n \to X(1)$ is of degree $3n$.
This gives a degree $6n^2$ cover $\cW_n \to X(1)$.  In the next section we determine
the structure of the Galois group of this cover. 

We conclude this section with an observation of an isomorphism, for $n$ dividing $8$,
between the Weber curve $\cW_n$ and the Fermat curve $\cF_n : X^n + Y^n + Z^n = 0$.
In particular the cover $\cW_{3n} \to \cW_n$ factors through the quotient
$$
\begin{tikzcd}[row sep=0pt]
  \cW_{3n} \arrow[r] & \cF_n. \\
  (X_0:X_1:X_2:X_3) \arrow[|->,r] & (X_0:X_1:X_2)
\end{tikzcd}
$$
Indeed this is the quotient by the group of automorphisms
$$
\{ (X_0:X_1:X_2:X_3) \mapsto (\zeta_3^iX_0:\zeta_3^iX_1:\zeta_3^iX_2:X_3) \;:\; i \in \ZZ/3\ZZ \},
$$
which stabilizes the fibers of the quotient $\cW_{3n} \to \cW_n$, which must factor
through $\cF_n$. In view of the defining equations for $\cW_{3n}$, we infer that
the induced map $\cF_n \to \cW_n$ is given by:
$$
(X:Y:Z) \longmapsto \left(X^3:Y^3:Z^3:\frac{XYZ}{\sqrt{2}^m}\right).
$$
In what follows, we prove that $\cW_{3n} \to \cW_n$ has degree~$3$, hence $\cF_n \to \cW_n$
is an isomorphism.
As with the Weber curves, the Fermat curves come equipped with a triple of projections
$\pi_i: \cF_n \to \cY_n \isom \PP^1$:
$$
\pi_i((X:Y:Z)) = \left\{
\begin{array}{l}
  (Y:Z) \mbox{ if } i = 0,\\
  (X:Z) \mbox{ if } i = 1,\\
  (X:Y) \mbox{ if } i = 2,
\end{array}\right.
$$
parametrized by the modular functions $(\uu_1/\uu_2)^m$, $(\uu_0/\uu_2)^m$ and $(\uu_0/\uu_1)^m$,
respectively. This gives the diagram of morphisms between Weber and Fermat curves and their quotients:
$$
\begin{tikzcd}[column sep=10mm]
  \cX_8 \arrow[d,"\ZZ/2\ZZ",swap]   && \arrow[ll,swap,"\ZZ/2\ZZ\times\ZZ/8\ZZ"] \cW_8 \arrow[d,"(\ZZ/2\ZZ)^2"] \arrow[rr,"\isom"] && \cF_8 \arrow[d,"(\ZZ/2\ZZ)^2",swap] \arrow[rr,"\ZZ/8\ZZ"] && \cY_8 \arrow[d,"\ZZ/2\ZZ"] \\
  \cX_4 \arrow[d,"\ZZ/2\ZZ",swap]   && \arrow[ll,swap,"\ZZ/2\ZZ\times\ZZ/4\ZZ"] \cW_4 \arrow[d,"(\ZZ/2\ZZ)^2"] \arrow[rr,"\isom"] && \cF_4 \arrow[d,"(\ZZ/2\ZZ)^2",swap] \arrow[rr,"\ZZ/4\ZZ"] && \cY_4 \arrow[d,"\ZZ/2\ZZ"] \\
  \cX_2 \arrow[d,"\ZZ/2\ZZ",swap]   && \arrow[ll,swap,"\ZZ/2\ZZ\times\ZZ/2\ZZ"] \cW_2 \arrow[d,"(\ZZ/2\ZZ)^2"] \arrow[rr,"\isom"] && \cF_2 \arrow[d,"(\ZZ/2\ZZ)^2",swap] \arrow[rr,"\ZZ/2\ZZ"] && \cY_2 \arrow[d,"\ZZ/2\ZZ"] \\
  \cX_1 \arrow[ddrrr,bend right=12,end anchor=west] &&
  \cW_1 \arrow[ll,swap,"\ZZ/2\ZZ"] \arrow[ddr,bend right=16,end anchor={[yshift=1.0mm]west}] \arrow[rr,"\isom"] &&
  \cF_1 \arrow[ddl,bend left=16,end anchor={[yshift=1.0mm]east}] \arrow[rr,"\isom"] &&
  \cY_1 \arrow[ddlll,bend left=12] \\[-5mm]
  u \arrow[ddrrr,mapsto,bend right=16,end anchor=west] && && && s \arrow[ddlll,mapsto,bend left=12,end anchor=east]\\[-2mm]
  &&& X(1) \\[-7mm]
  &&& \displaystyle j = \frac{(u-16)^3}{u} = 2^8\frac{(s^2+s+1)^3}{s^2(s+1)^2}
\end{tikzcd}
$$
The isomorphisms $\cW_n \to \cF_n$ appear in the appendix, for all divisors $n$ of $8$. In particular,
for $n = 1$, the isomorphism is given by the following map:
\begin{equation}
  \label{eqn:Weber1-Fermat1-isom}
\begin{tikzcd}[row sep=-2mm,column sep=8mm]
  \cW_1: \left\{
  \begin{array}{@{}c@{}}
    X_0 + X_1 + X_2 = 48 X_3,\\
    X_0 X_1 X_2 = 4096 X_3^3
  \end{array} \right. \arrow[r] & \cF_1 : X + Y + Z = 0\\[1mm]
  (u_0:u_1:u_2:u_3) \arrow[r,mapsto] & (u_0-16u_3 : u_1-16u_3 : u_2-16u_3)\\[1mm]
  (16s_0^3:16s_1^3:16s_2^3:s_0s_1s_2) & \arrow[l,mapsto] (s_0:s_1:s_2)
\end{tikzcd}
\end{equation}
The affine parametrization $(s:t:1)$ of $\cF_1$ by $s = -(t+1)$ gives the parametrization
of $\cW_1$:
$$
  (u_0:u_1:u_2:1)
  = 
  \left( -\frac{16s^2}{(s+1)}: \frac{16(s+1)^2}{s}:  -\frac{16}{s(s+1)}: 1\right)
  = 
  \left( \frac{16(t+1)^2}{t}: -\frac{16t^2}{(t+1)}:  -\frac{16}{t(t+1)}: 1\right).
$$
After the substitution $u = -16s^2/(s+1)$ and quadratic twist of the models $\cE_0/\cX_1$ and $\cE_1/\cX_1$, we obtain
the isogenous families
\begin{equation}
  \label{eqn:C0/Y1-C1/Y1}
\cC_0/\cY_1: y^2 = x(x - 1)(x + t) = x(x - 1)(x - (s + 1))
\mbox{ and }
\cC_1/\cY_1: y^2 = x((x + s)^2 + 4x).
\end{equation}
of $j$-invariants $256(s^2+s+1)^3/s^2(s+1)^2$ and $16(s^2+16s+16)^3/s^4(s+1)$, and discriminants
$16s^2(s+1)^2$ and $256s^4(s+1)$, respectively.

\subsection*{Modular group}

To each factorization $mn = 24$, the Weber curve $\cW_{n}$ in $\PP^3$, defined by
the triple of Weber functions $(\uu_0^m,\uu_1^m,\uu_2^m)$, comes equipped with an
action of $\PSL_2(\ZZ)$. We denote the kernel of the action by $\Gamma_n$, identifying
the Weber curves with the modular curve $X(\Gamma_n)$.  The action of $\PSL_2(\ZZ)$
on Weber functions induces a representation in $\GL_3(\QQ(\zeta_n))$ determined by
the images of the generators $S$ and $T$.
$$
\begin{tikzcd}[row sep=1mm]
  \PSL_2(\ZZ) \arrow[r] & \GL_3(\QQ(\zeta_n)) \\
  S, T \arrow[r,mapsto] &
  \left(\begin{array}{*{5}{c@{\;}}}
    1 & 0 & 0 \\
    0 & 0 & \overline{\zeta}_{8}^{m} \\ 
    0 & \zeta_{8}^{m} & 0
  \end{array} \right)\ccomma
  \left(\begin{array}{*{5}{c@{\;}}}
    0 & \zeta_{24}^m & 0 \\
    \overline{\zeta}_{24}^{2m} & 0 & 0 \\
    0 & 0 & \zeta_{24}^m \end{array}\right)
\end{tikzcd}
$$
The image group is finite, whose kernel $\Gamma_n$ is a normal congruence subgroup
of $\PSL_2(\ZZ)$, such that $\PSL_2(\ZZ)/\Gamma_n$ is the Galois group of the cover
$\cW_n \to X(1)$.

We reduce the computation of $\Gamma_n$ by first proving that
$\Gamma_1 \isom \Gamma(2)$, then noting that $\Gamma_1 \subset \Gamma_3 \cap \Gamma_8
= \Gamma_{24}$, which reduces to determining $\Gamma_3$ and the chain
$\Gamma_1 \subset \Gamma_2 \subset \Gamma_4 \subset \Gamma_8$.

Let $\mmu_m = \langle\zeta_m\rangle$ be the group of $m$-th roots of unity. We identify
$\mmu_m^3$ with the subgroup (scheme)
of $\GG_m^3$, acting on affine space $\AA^3$.
We define the antidiagonal group,
$$
\nabla(\mmu_m^2) = \left\{
  \big(\zeta_m^i,\zeta_m^j,\zeta_m^{-i-j}\big) \in \mmu_m^3 \;|\; i,j \in \ZZ/m\ZZ \right\},
$$
and the diagonal group
$
\Delta(\mmu_m) = \left\{ \big(\zeta_m^i,\zeta_m^i,\zeta_m^i\big) \in \mmu_m^3 \;|\; i \in \ZZ/m\ZZ \right\}.
$
If $m \equiv 0 \bmod 3$, then $\Delta(\mmu_3) \subset \nabla(\mmu_m^2)$, otherwise
the groups $\Delta(\mmu_m)$ and $\nabla(\mmu_m)$ are independent.

\begin{proposition}
  \label{prop:Weber_groups}
  For each divisor $mn$ of $24$, the morphism $\cW_{mn} \to \cW_n$ is the quotient by
  a subgroup of automorphisms in $\mmu_m^3$, isomorphic to $\Gamma_{n}/\Gamma_{mn}$.
  \begin{itemize}
  \item
    If $m = 3$, then the automorphism group of the cover is $\Delta(\mmu_3)$. 
  \item
    If $m$ divides $8$, then the automorphism group of the
    cover is $\nabla(\mmu_m^2)$. 
  \end{itemize}
  In particular, the degree of $\cW_{3n} \rightarrow \cW_n$ is $3$, and if $m$ divides $8$,
  the degree of $\cW_{mn} \rightarrow \cW_n$ is $m^2$.
\end{proposition}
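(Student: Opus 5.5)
We propose to work throughout with the Galois correspondence for the tower $\cW_{mn} \to \cW_n \to X(1)$. Since $\Gamma_{mn} \subseteq \Gamma_n$ are both normal in $\PSL_2(\ZZ)$, the cover $\cW_{mn} \to \cW_n$ is Galois. Its group $\Gamma_n/\Gamma_{mn}$ acts on $\cW_{mn}$ through the restriction of the representation $\iota$ (with exponent $24/mn$) to $\Gamma_n$.

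The first step identifies this action as diagonal. An element $\gamma \in \Gamma_n$ fixes $\uu_i^{24/n}$ for each $i$. Hence it sends $\uu_i^{24/(mn)}$ to $\lambda_i \uu_i^{24/(mn)}$ with $\lambda_i^m = 1$, so its image lies in $\mmu_m^3$. Moreover $\gamma$ preserves the relation $\uu_0\uu_1\uu_2 = \sqrt{2}$, since $\ff\,\ff_1\ff_2 = \sqrt2$ and the normalizing roots $\zeta_{16}^{\pm1}$ cancel. Therefore $\lambda_0\lambda_1\lambda_2$ fixes $(\uu_0\uu_1\uu_2)^{24/(mn)}$, a nonzero constant, which forces $\lambda_0\lambda_1\lambda_2 = 1$. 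The group $\Gamma_n/\Gamma_{mn}$ thus embeds in $\nabla(\mmu_m^2)$. On $\PP^3$ this diagonal action fixes $X_3$ and scales the $X_i$; in projective terms it is an action of $\mmu_m^3$ modulo scalars. In the case $m=3$, the coordinates $(X_0:X_1:X_2:X_3)$ with $X_3$ fixed identify $\nabla(\mmu_3^2)$ acting on $(X_0,X_1,X_2)$ as the relevant group. The statement's $\Delta(\mmu_3)$ description is then obtained by composing with the scalar $\zeta_3^{-1}$ on all four coordinates.

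The second step is the reverse inclusion, which reduces to a degree count. We first compute the degree of $\cW_{mn}\to\cW_n$ directly from the equations \eqref{eqn:weber3n} and \eqref{eqn:weber1n}. The map $\cW_{mn}\to\cW_n$ is $X_i \mapsto X_i^m$, suitably normalized. A generic fibre over a point with $X_0X_1X_2\neq 0$ therefore consists of the $m$-th roots $(x_0,x_1,x_2)$ subject to the product constraint, which fixes $x_0x_1x_2$ in terms of $X_3$. The fibre has at most $m^2$ points when $m\mid 8$, and at most $3$ points when $m=3$, where $x_0x_1x_2 = 2^{?}x_3^3$ leaves exactly the $\mmu_3$ ambiguity. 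This gives the upper bound $|\Gamma_n/\Gamma_{mn}| \le m^2$, respectively $3$.

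The lower bound comes from exhibiting explicit elements. We use the corollary on $D$, generated by $\iota(T)^2=\mathrm{diag}(\zeta_{24}^{-1},\zeta_{24}^{-1},\zeta_{24}^2)$ and $\iota(STS)^2=\mathrm{diag}(\zeta_{24}^{-1},\zeta_{24}^2,\zeta_{24}^{-1})$, of order $192 = 3\cdot 8^2$. Raised to the power $24/(mn)$ and restricted to $\Gamma_n$ (that is, to suitable powers of these generators), these elements generate all of $\nabla(\mmu_m^2)$ for $m\mid 8$. For $m=3$, the eighth powers $\mathrm{diag}(\zeta_3^{-1},\zeta_3^{-1},\zeta_3^{-1})$ (up to the factor already visible) give $\Delta(\mmu_3)$. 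The base case $\Gamma_1=\Gamma(2)$ with $[\PSL_2(\ZZ):\Gamma(2)]=6$, together with $|G| = 6\cdot 192 = 1152 = 6\cdot 3\cdot 8^2$, shows that these indices multiply correctly along the chain $\Gamma_1\supset\Gamma_2\supset\Gamma_4\supset\Gamma_8$ and $\Gamma_1\supset\Gamma_3$. This forces equality at each step, hence the degree statements, including the degree~$3$ claim for $\cW_{3n}\to\cW_n$ used for $\cF_n\isom\cW_n$.

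The main obstacle is this bookkeeping of which powers of the generators of $D$ lie in $\Gamma_n$ and what they induce on $\uu_i^{24/(mn)}$. In particular, the order-$3$ part interacts with the scalar $\zeta_3$ and with the identification of $\mmu_m^3$ modulo projective scalars. I would handle it by decomposing $D\isom \mmu_3\times(\ZZ/8\ZZ)^2$ via the Chinese remainder theorem and treating the $3$-part and the $2$-part separately.
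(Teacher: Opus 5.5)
Your argument for $m \mid 8$ is sound, and it is organized differently from the paper's. The paper reads off the deck group as the stabilizer in $\mmu_m^3$ of the defining equations of $\cW_{mn}$. You instead prove both inclusions for $\Gamma_n/\Gamma_{mn}$ directly. The product relation $\uu_0\uu_1\uu_2=\sqrt{2}$ gives the embedding into $\nabla(\mmu_m^2)$. The elements $T^{2n},(STS)^{2n}\in\Gamma_n$ act on $\uu_i^{24/(mn)}$ by $\mathrm{diag}(\zeta_m^{-1},\zeta_m^{-1},\zeta_m^{2})$ and $\mathrm{diag}(\zeta_m^{-1},\zeta_m^{2},\zeta_m^{-1})$. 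These generate $\nabla(\mmu_m^2)$ because $-3$ is a unit modulo $m$.

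For $m=3$, however, there is a genuine gap. Your first step only gives an embedding $\Gamma_n/\Gamma_{3n}\hookrightarrow\nabla(\mmu_3^2)$, a group of order $9$, and neither mechanism you offer cuts this down to $\Delta(\mmu_3)$.
\begin{itemize}
\item Multiplying all four coordinates by $\zeta_3^{-1}$ is the identity on $\PP^3$, so it cannot turn an order-$9$ group into an order-$3$ one.
\item The fibre count is wrong. Over a point of $\cW_n$ with $X_3=1$, the product constraint alone keeps $9$ of the $27$ triples of cube roots. These form a $\nabla(\mmu_3^2)$-torsor, not a $\mmu_3$-torsor.
\end{itemize}
Since your final ``this forces equality'' rests on that upper bound, the degree-$3$ claim is not yet proved.

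The missing ingredient is the other defining equation of $\cW_{3n}$, namely $X_0^n+X_1^n+X_2^n=0$, i.e.\ $\uu_0^8+\uu_1^8+\uu_2^8=0$. This is exactly what the paper uses: the stabilizer of $x^n+y^n+z^n$ in $\mmu_3^3$ is $\Delta(\mmu_3)$ because $3\nmid n$. In your language, suppose $\gamma\in\Gamma_n$ scales $\uu_i^{8/n}$ by $\lambda_i\in\mmu_3$. Then $\sum_i\lambda_i^n\uu_i^8=0$. Since $\uu_0^8/\uu_1^8$ is nonconstant, this forces $\lambda_0^n=\lambda_1^n=\lambda_2^n$, hence $\lambda_0=\lambda_1=\lambda_2$.

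Alternatively, your closing count can be repaired, but only if you state the fact that actually does the work:
\begin{itemize}
\item $\Gamma_n/\Gamma_{3n}$ is a quotient of $\Gamma_n/\Gamma_{24}\subseteq D$, so its order divides $|D|=192$.
\item As a subgroup of $\nabla(\mmu_3^2)$ containing a nontrivial element, it has order $3$ or $9$. Since $9\nmid 192$, it has order $3$.
\item The element $T^{16}\in\Gamma_n$ acts on $\uu_i^{8/n}$ by the scalar $\zeta_3^{-8/n}\neq 1$, which identifies this order-$3$ group as $\Delta(\mmu_3)$.
\end{itemize}
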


\begin{proof}
  The quotients $\cW_{mn} \to \cW_n$ are induced by the restriction, to the
  affine Weber curves, of the quotients $\AA^3 \to \AA^3$ by $\mmu_m^3$,
  sending $(x_0,x_1,x_2)$ to $(x_0^m,x_1^m,x_2^m)$.  Each Weber curve is a
  normal cover of $X(1)$, and the relative Galois group of the cover
  $\cW_{mn} \to \cW_n$ is
  $
  \left(\PSL_2(\ZZ)/\Gamma_{n}\right)/\left(\PSL_2(\ZZ)/\Gamma_{mn}\right) \isom \Gamma_{n}/\Gamma_{mn}.
  $

  We first consider $m = 3$ : the affine Weber curve $\cW_{3n}$ ($X_3 \ne 0$)
  is given by equations $x^{n}+y^{n}+z^{n} = 0$ and $x y z = c_{3n} \ne 0$,
  with $n$ coprime to $3$.
  Consequently the subgroup of $\mmu_3^3$ stabilizing the curve is the
  intersection of the group $\Delta(\mmu_3)$, stabilizing $x^n + y^n + z^n$,
  and $\nabla(\mmu_3)$, fixing $xyz$.
  Since $\Delta(\mmu_3) \subset \nabla(\mmu_3)$, the automorphism group of
  the cover is $\Delta(\mmu_3)$, and $\cW_{3n} \to \cW_n$ is of degree~$3$.

  Now we consider $m$ divides $8$. The affine Weber curve is defined by 
  equations $x^{mn} + y^{mn} + z^{nm} = 48$, if $n$ is coprime to $3$,
  or otherwise $x^{mn/3} + y^{mn/3} + z^{nm/3} = 0$, and $xyz = c_{mn} \ne 0$.
  The former is fixed by $\mmu_m^3$, hence the automorphism group of the
  cover is the subgroup $\nabla(\mmu_m)$ fixing the form $xyz$.
  The degrees of the morphisms $\cW_{mn} \to \cW_n$ follows since
  $|\Delta(\mmu_3)| = 3$ and $|\nabla(\mmu_m)| = m^2$.
\end{proof}

\begin{proposition}
  \label{prop:Weber_group1}
  The Weber kernel group $\Gamma_1$ equals $\Gamma(2)$ and $\cW_1 \isom X(2)$.
\end{proposition}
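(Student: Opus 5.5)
For $m=24$, $n=1$, the curve $\cW_1$ is the image of $(\uu_0^{24}:\uu_1^{24}:\uu_2^{24}:1)$, and the plan is to read off the action of $S$ and $T$ on these coordinates from Proposition~\ref{prop:Weber_group_action}. Raising the formulas there to the $24$-th power removes every root of unity except those of order dividing $24$. Since $\zeta_{8}^{24}=\zeta_{12}^{24}=\zeta_{24}^{24}=1$, we get
$$
(\uu_0^{24},\uu_1^{24},\uu_2^{24})\circ S=(\uu_0^{24},\uu_2^{24},\uu_1^{24}),\qquad
(\uu_0^{24},\uu_1^{24},\uu_2^{24})\circ T=(\uu_1^{24},\uu_0^{24},\uu_2^{24}).
$$
So $\PSL_2(\ZZ)$ acts on the triple through a homomorphism $\rho:\PSL_2(\ZZ)\to S_3$ with $S\mapsto(1\,2)$ and $T\mapsto(0\,1)$. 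These two transpositions generate $S_3$, so $\rho$ is surjective.

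The first step is to show that $\Gamma_1$ is exactly $\ker\rho$. An element acting trivially on the triple fixes every coordinate function of $\cW_1$, so it lies in $\Gamma_1$. Conversely, an element of $\Gamma_1$ acts trivially on the function field of $\cW_1$, and this field is generated by the $\uu_i^{24}$. Hence $\PSL_2(\ZZ)/\Gamma_1\isom S_3$, a group of order $6$.

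The second step compares this with $\Gamma(2)$. The kernel of $\PSL_2(\ZZ)\to\PSL_2(\ZZ/2\ZZ)=\SL_2(\ZZ/2\ZZ)\isom S_3$ is the image of $\Gamma(2)$, which has index $6$. To show $\ker\rho=\Gamma(2)$, I will check that $\rho$ factors through reduction mod $2$, i.e.\ that $\Gamma(2)\subseteq\ker\rho$. Equal indices then give equality.

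For this containment I will use the standard generators of $\Gamma(2)/\{\pm1\}$, namely $T^2$ and $ST^2S^{-1}$, which generate it freely. Since $\rho(T)^2=1$ and $\rho(T^2)$ is conjugated by $\rho(S)$, both generators lie in $\ker\rho$. Alternatively, one can note that $\rho$ kills $S^2$, $(ST)^3$ and $T^2$, and that the normal closure of $T^2$ in $\PSL_2(\ZZ)$ is $\Gamma(2)$. Either way the index count finishes the argument.

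A second route avoids group-theoretic bookkeeping. The functions $\uu_i^{24}$ are, up to sign, $\ff^{24},-\ff_1^{24},-\ff_2^{24}$, and these are the classical Hauptmodul-type functions $\lambda$-quotients of level $2$. Since $\ff^8=\ff_1^8+\ff_2^8$, we have $\ff_1^8/\ff^8=\lambda$ and $\ff_2^8/\ff^8=1-\lambda$ in suitable normalization. So each $\uu_i^{24}$ is invariant under $\Gamma(2)$. This gives the containment directly and is the fact I would cite if the generator argument needs support.

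For $\cW_1\isom X(2)$: the Galois cover $\cW_1\to X(1)$ has group $\PSL_2(\ZZ)/\Gamma_1=\PSL_2(\ZZ)/\Gamma(2)$. By the identification of $\cW_1$ with $X(\Gamma_1)$ made above, $\cW_1\isom X(\Gamma(2))=X(2)$. As a consistency check, the degree count $6n^2=6$ agrees.

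The main obstacle is the claim that the function field of $\cW_1$ is generated by the $\uu_i^{24}$, which is what makes $\Gamma_1=\ker\rho$ rather than something strictly smaller. This holds by the paper's definition of $\Gamma_n$ as the kernel of the action on the Weber functions. Should a cleaner justification be needed, I would use the explicit rational parametrization of $\cW_1$ by $s$, via $\cF_1$, together with the formula for $j$ in terms of $s$. That formula shows directly that $\cW_1\to X(1)$ has degree $6$ and ramification indices $2,3,2$ over $1728,0,\infty$ as for $X(2)$.
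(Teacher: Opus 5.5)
Your proof is correct, but your main route differs from the paper's. The paper's argument is one line of classical facts. It writes $\uu_0^{24},\uu_1^{24},\uu_2^{24}$ as the $\eta$-quotients $-(\eta((\tau+1)/2)/\eta(\tau))^{24}$, $-(\eta(\tau/2)/\eta(\tau))^{24}$ and $-2^{12}(\eta(2\tau)/\eta(\tau))^{24}$. It then asserts that these are $\Gamma(2)$-invariant and generate the $S_3=\PSL_2(\FF_2)$-extension $\QQ(X(2))/\QQ(X(1))$, and stops.

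You instead derive both halves from Proposition~\ref{prop:Weber_group_action}. Surjectivity of $\rho$, since $\rho(S)$ and $\rho(T)$ are distinct transpositions, is the ``full $S_3$'' statement. The containment $\Gamma(2)\subseteq\ker\rho$ becomes pure group theory: $\rho(T)$ is an involution, and $\Gamma(2)/\{\pm1\}$ is generated by $T^2$ and $ST^2S^{-1}$. The index count then closes the argument. Your route is self-contained: it uses no transformation law of the $\eta$-quotients beyond $S$ and $T$, and it proves surjectivity rather than asserting it. The paper's route is shorter and identifies the coordinates concretely as level-$2$ functions.

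Two small remarks. First, in your second route, $\Gamma(2)$-invariance of $\lambda=\ff_2^8/\ff^8$ (the standard normalization, with $\ff_1^8/\ff^8=1-\lambda$) does not by itself give invariance of the $24$-th powers. You also need $\ff\,\ff_1\ff_2=\sqrt2$, which yields $\ff^{24}=16/(\lambda(1-\lambda))$, $\ff_1^{24}=16(1-\lambda)^2/\lambda$ and $\ff_2^{24}=16\lambda^2/(1-\lambda)$.

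Second, the ``main obstacle'' you flag is not really one. $\Gamma_1=\ker\rho$ holds by definition, because the function field of $\cW_1$ is generated by the $\uu_i^{24}$ by construction. The step that does need a word for $\cW_1\isom X(2)$ is that this field is the whole function field of $X(2)$. That follows from the Galois correspondence, using what you have already proved: the field contains $j$, it lies in the $\Gamma(2)$-invariant field of degree $6$ over $\mathbb{C}(j)$, and $\PSL_2(\ZZ)/\Gamma(2)$ acts on it faithfully.
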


\begin{proof}
  The Weber curve $\cW_2$ is parametrized by the three normalized $\eta$-quotients:
  $$
  \uu_0^{24} = -{\left(\!\frac{\eta((\tau+1)/2)}{\eta(\tau)}\!\right)\ccomma}^{\!\!\!\!24}\quad
  \uu_1^{24} = -{\left(\!\frac{\eta(\tau/2)}{\eta(\tau)}\!\right)\ccomma}^{\!\!\!\!24}\quad
  \uu_2^{24} = -{2^{12}\left(\!\frac{\eta(2\tau)}{\eta(\tau)}\!\right)\ccomma}^{\!\!\!\!24}
  $$
  invariant under $\Gamma(2)$, which generate the $S_3 = \PSL_2(\FF_2)$-extension $\QQ(X(2))/\QQ(X(1))$.
\end{proof}

\begin{proposition}
  \label{prop:Weber_group3}
  The Weber kernel group $\Gamma_3$ equals $\Gamma(2) \cap \Gamma_{ns}^+(3)$,
  and for each $n$ dividing $8$
  $$
  \Gamma_{3n} = \Gamma_n \cap \Gamma_{ns}^+(3).
  $$
\end{proposition}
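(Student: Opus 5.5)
We first show $\Gamma_3 = \Gamma(2) \cap \Gamma_{ns}^+(3)$, then deduce the general formula from the structure of the covers in Proposition~\ref{prop:Weber_groups}.

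\textbf{Reduction.} By Proposition~\ref{prop:Weber_groups}, $\cW_3 \to \cW_1$ is a cyclic cover of degree $3$ with group $\Delta(\mmu_3)$. By Proposition~\ref{prop:Weber_group1}, $\Gamma_1 = \Gamma(2)$. Hence $\Gamma_3$ is a normal subgroup of index $3$ in $\Gamma(2)$, and $[\PSL_2(\ZZ):\Gamma_3] = 18$.

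\textbf{Computing $\Gamma_3$.} Since $\Gamma_3 \supseteq \Gamma_{24} \supseteq \Gamma(48)$, and the $3$-part of the cover is detected modulo $3$, we have $\Gamma_3 \supseteq \Gamma(6)$. So it suffices to compute the image of $\Gamma_3$ in $\PSL_2(\ZZ/6\ZZ) \isom S_3 \times \PSL_2(\FF_3)$.

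Within $\Gamma(2)$, the image of $\Gamma_3$ is an index-$3$ subgroup $H$ of $\PSL_2(\FF_3) \isom A_4$. Since $\Gamma_3$ is normal in $\PSL_2(\ZZ)$, $H$ must be normal in $A_4$. The only normal subgroup of index $3$ is the Klein four-group $V_4$, which is the normalizer of a nonsplit Cartan subgroup in $\PSL_2(\FF_3)$: the nonsplit Cartan $\FF_9^\times$ has image of order $2$ modulo $\pm1$, and its normalizer has order $4$ in $\PSL_2(\FF_3)$. Hence $H = \Gamma_{ns}^+(3)/\Gamma(3)$, and $\Gamma_3 = \Gamma(2) \cap \Gamma_{ns}^+(3)$.

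\textbf{Concrete check.} The invariant is $\uu_0^8$. Under $T$ it becomes $\zeta_3^{-1}\uu_1^8$, and the $\Delta(\mmu_3)$ action multiplies all three functions by $\zeta_3$. To confirm the identification, check that the generators of $\Gamma(2)$, namely $T^2$ and $ST^2S$, act on $(\uu_0^8,\uu_1^8,\uu_2^8)$ by the scalar $\zeta_3^{\mp1}$ using $\iota$. Their images modulo $3$ lie outside $V_4$, as they must, since they act nontrivially. This step has no real obstacle: a subgroup of index $3$ normal in $\PSL_2(\ZZ)$ is forced to be this one.

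\textbf{General case.} For $n \mid 8$, Proposition~\ref{prop:Weber_groups} shows that the $\Delta(\mmu_3)$-cover and the $\nabla(\mmu_n^2)$-cover have coprime degrees, $3$ and $n^2$. Since $\Gamma_{3n}$ is the kernel of the action on $(\uu_i^{8/n})$, which determines both $\uu_i^{8}$ and $\uu_i^{24/n}$, we get $\Gamma_{3n} = \Gamma_3 \cap \Gamma_n$. The function field of $\cW_{3n}$ is the compositum of those of $\cW_3$ and $\cW_n$, because $\gcd(3,n)=1$ lets us recover $\uu_i^{8/n}$ by Bézout. Therefore
$$
\Gamma_{3n} = \Gamma_n \cap \Gamma(2) \cap \Gamma_{ns}^+(3) = \Gamma_n \cap \Gamma_{ns}^+(3),
$$
using $\Gamma_n \subseteq \Gamma_1 = \Gamma(2)$.

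\textbf{Main difficulty.} The delicate point is justifying that $\Gamma_3 \supseteq \Gamma(6)$, i.e.\ that the cover is detected modulo $3$ and not only modulo $48$ or $144$. The plan is to argue that the quotient $\Gamma(2)/\Gamma_3$ has order $3$. Therefore $\Gamma_3$ contains all elements of $\Gamma(2)$ whose image modulo $48$ lies in a $2$-group, and in particular $\Gamma(2)\cap\Gamma(3)$ up to a $2$-power part, which $\Gamma(16)$-invariance of the $2$-parts handles.
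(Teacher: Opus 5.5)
Your proposal is correct and follows the same route as the paper: $\Gamma_3$ has index $3$ in $\Gamma_1=\Gamma(2)$ and contains $\Gamma(6)$, so it is the preimage of the unique normal index-$3$ subgroup $V_4\subset\PSL_2(\FF_3)\isom A_4$, which is $\Gamma(2)\cap\Gamma_{ns}^+(3)$. You also supply two things the paper leaves implicit, namely the containment $\Gamma(6)\subseteq\Gamma_3$ (most cleanly stated as: $\Gamma(6)/\Gamma(48)\isom\ker\bigl(\SL_2(\ZZ/16\ZZ)\to\SL_2(\ZZ/2\ZZ)\bigr)$ is a $2$-group, so it maps trivially to the order-$3$ quotient $\Gamma(2)/\Gamma_3$) and the B\'ezout argument giving $\Gamma_{3n}=\Gamma_3\cap\Gamma_n$, with only a harmless slip in your side check, where in fact $\uu_0^8\circ T=\zeta_3\uu_1^8$ rather than $\zeta_3^{-1}\uu_1^8$.
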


\begin{proof}
  By Proposition~\ref{prop:Weber_group1} we have $\Gamma_1 = \Gamma(2)$ and
  by Proposition~\ref{prop:Weber_groups}, we have $\Gamma_3/\Gamma_1 \isom \Delta(\mmu_3)$
  of order $3$. Thus $\Gamma_3$ is a congruence subgroup satisfying
  $
  \Gamma(6) \subset \Gamma_3 \subset \Gamma(2),
  $
  of index $3$ in $\Gamma(2)$. This uniquely characterizes $\Gamma_3$ as
  the intersection of $\Gamma(2)$ with the normal subgroup
  $\Gamma_{ns}^+(3)$ of $\PSL_2(\ZZ)$.
\end{proof}

\noindent
It thus suffices to characterize the groups $\Gamma_n$ for $n$ dividing $8$.

\begin{proposition}
  \label{prop:Weber_group2}
  The Weber kernel group $\Gamma_2$ equals $\Gamma(4)$ and $\cW_2 = X(4)$.
\end{proposition}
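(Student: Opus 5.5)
Since $\Gamma_1 = \Gamma(2)$ by Proposition~\ref{prop:Weber_group1}, and the case $m = 2$ of Proposition~\ref{prop:Weber_groups} shows that $\Gamma_1/\Gamma_2 \isom \nabla(\mmu_2^2) \isom (\ZZ/2\ZZ)^2$, the group $\Gamma_2$ is a normal subgroup of $\PSL_2(\ZZ)$ of index $4$ in $\Gamma(2)$. The image of $\Gamma(4)$ in $\PSL_2(\ZZ)$ also has index $4$ in the image of $\Gamma(2)$: the kernel of $\PSL_2(\ZZ/4\ZZ) \to \PSL_2(\ZZ/2\ZZ)$ has order $48/2/6 = 4$. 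It therefore suffices to prove one inclusion, namely $\Gamma(4) \subseteq \Gamma_2$ (or the reverse). Equality of indices then gives $\Gamma_2 = \Gamma(4)$, and so $\cW_2 = X(\Gamma_2) = X(4)$.

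The plan is to prove $\Gamma(4) \subseteq \Gamma_2$ by computing the action of generators. The group $\Gamma(4)$ (mod $\pm1$) is the normal closure in $\PSL_2(\ZZ)$ of $T^4$, since $X(4)$ has genus $0$ and all its cusps have width $4$. Because $\Gamma_2$ is normal, it is enough to check that $T^4$ acts trivially on the triple $(\uu_0^{12},\uu_1^{12},\uu_2^{12})$. Proposition~\ref{prop:Weber_group_action} gives $\iota(T)^2 = \mathrm{diag}(\zeta_{24}^{-1},\zeta_{24}^{-1},\zeta_{24}^{2})$, so
\[
\iota(T)^4 = \mathrm{diag}(\zeta_{12}^{-1},\zeta_{12}^{-1},\zeta_{12}).
\]
Raising the entries to the $12$-th power, which corresponds to passing to $m = 12$, $n = 2$, gives the identity. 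Hence $T^4 \in \Gamma_2$, and therefore $\Gamma(4) \subseteq \Gamma_2$.

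The step needing the most care is the claim that $\Gamma(4)/\{\pm1\}$ is the normal closure of $T^4$. If I prefer to avoid it, I would instead use the explicit generators of $\Gamma(4)$: it is free on $T^4$ together with its conjugates by coset representatives such as $S$, $ST$, $ST^2$, and so on. By normality of $\Gamma_2$, all of these conjugates again lie in $\Gamma_2$. As an alternative, I would show directly that $\Gamma_2 \subseteq \Gamma(4)$ by noting that $\uu_i^{12}$ are, up to constants, the functions $\lambda$-type $\eta$-quotients such as $(\eta(\tau/2)/\eta(\tau))^{12}$, whose square roots generate the function field of $X(4)$ over that of $X(2)$. For consistency I would also check the action of $STS$, using $\iota(STS)^2$, which gives the analogous element of $\Gamma(4)$.
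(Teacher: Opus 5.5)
Your argument is correct and has the same structure as the paper's proof: $[\Gamma(2):\Gamma_2]=4$ from Propositions~\ref{prop:Weber_group1} and~\ref{prop:Weber_groups}, $[\Gamma(2):\Gamma(4)]=4$ in $\PSL_2(\ZZ)$, and the inclusion $\Gamma(4)\subseteq\Gamma_2$. The one difference is how you get the inclusion. The paper simply asserts it from the $\eta$-quotient transformation laws, whereas you derive it from two facts: $\Gamma(4)/\{\pm1\}$ is the normal closure of $T^4$, and $\iota(T)^4$ is diagonal with $12$-th roots of unity as entries. One small slip: the third entry of $\iota(T)^4$ is $\zeta_{24}^4=\zeta_{12}^2$, not $\zeta_{12}$, but this does not affect the conclusion.
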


\begin{proof}
  By Proposition~\ref{prop:Weber_groups}, the Weber kernel group of $\cW_2 \to \cW_1$
  satisfies $\Gamma_2/\Gamma_1 \isom \Delta(\mmu_2) \isom (\ZZ/2\ZZ)^2$, where
  $\Gamma_1 = \Gamma(2)$ by Proposition~\ref{prop:Weber_group1}.
  From the transformation of the $\eta$-quotients defining the Weber functions, it is
  clear that the triple $(\uu_0^{12},\uu_1^{12},\uu_2^{12})$ are invariant under $\Gamma(4)$.
  Since $\Gamma(4)/\Gamma(2) \isom (\ZZ/2\ZZ)^2$, it follows that $\Gamma_2 = \Gamma(4)$.
\end{proof}

\begin{proposition}
  \label{prop:Weber_group4}
  The Weber kernel group $\Gamma_4$ equals $\Gamma_s(8)$ and $\cW_4 = X_s(8)$.
\end{proposition}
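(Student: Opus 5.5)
The proof will follow the pattern of Proposition~\ref{prop:Weber_group2}: an index count plus an explicit invariance check. By Proposition~\ref{prop:Weber_groups}, the cover $\cW_4 \to \cW_2$ is the quotient by $\nabla(\mmu_2^2) \isom (\ZZ/2\ZZ)^2$, so $\Gamma_2/\Gamma_4$ has order $4$. By Proposition~\ref{prop:Weber_group2}, $\Gamma_2 = \Gamma(4)$, so $\Gamma_4$ is a normal subgroup of $\PSL_2(\ZZ)$ of index $4$ in $\Gamma(4)$. It therefore suffices to prove two things: that $\Gamma(8) \subseteq \Gamma_s(8) \subseteq \Gamma(4)$ with $[\Gamma(4):\Gamma_s(8)] = 4$ in $\PSL_2(\ZZ)$, and that the triple $(\uu_0^6,\uu_1^6,\uu_2^6)$ is invariant under $\Gamma_s(8)$. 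Together these give $\Gamma_s(8) \subseteq \Gamma_4$, and equality then follows from equal index.

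\textbf{Step 1: the index.} Here $\Gamma_s(8)$ is the normal subgroup of matrices congruent mod $8$ to a scalar $\pm\lambda I$ (so $\lambda^2 \equiv 1 \bmod 8$, and $\lambda \equiv \pm1 \bmod 4$ automatically lies in $\Gamma(4)$ modulo $\pm1$). The reduction map identifies $\Gamma(4)/\Gamma(8)$ with $\{I + 4A : A \in M_2(\ZZ/2\ZZ),\ \mathrm{tr} A = 0\}$, which has order $8$. The scalar matrices $\{I, 5I\}$ form a subgroup of order $2$ in it. Since we work in $\PSL_2$ and $-I$ is already absorbed into $\Gamma(4)$, this gives index $8/2 = 4$, as required.

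\textbf{Step 2: invariance.} First, $\Gamma(8)$ acts trivially on $\uu_i^6$. Writing $\uu_i^6$ as a constant times an $\eta$-quotient such as $(\eta(\tau/2)/\eta(\tau))^6$, the standard criteria for $\eta$-quotient transformations show invariance under $\Gamma(8)$; equivalently, $\ff^{6}$ has $q$-expansion in $q^{1/8}$ and the relevant multiplier is trivial. The remaining task is to check that a representative of the nontrivial class, a matrix $\gamma \equiv 5I \bmod 8$ (for instance $\gamma = \begin{pmatrix} 5 & 8 \\ 8 & 13\end{pmatrix}$), fixes each $\uu_i^6$. The cleanest route is to use the representation $\iota$ from the \textbf{N.B.}, cubed and restricted to sixth powers: express $\gamma$ as a word in $S$ and $T$ and compute its image, which must be a diagonal element of $\nabla(\mmu_4^2)$. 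Alternatively, one can apply the eta multiplier formula for $\eta(\gamma\tau)$ directly to $\eta(2\tau)/\eta(\tau)$, using the fact that a scalar matrix mod $8$ normalizes the relevant level-$2$ matrices, and show that the $24$th-root-of-unity multipliers cancel to a sixth power equal to $1$.

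\textbf{Main obstacle.} I expect the hardest step to be the explicit multiplier computation in Step 2. Showing that $\gamma \equiv 5I$ acts trivially, rather than by some element of $\nabla(\mmu_4^2)$ of order $2$, requires tracking Dedekind-sum or Petersson multiplier terms mod $48$. If this becomes unwieldy, I would fall back on a structural argument. Since $\Gamma_4$ is normal of index $4$ in $\Gamma(4)$ and contains $\Gamma(8)$, its image $\Gamma_4/\Gamma(8)$ is a normal subgroup of order $2$ in $\Gamma(4)/\Gamma(8) \isom (\ZZ/2\ZZ)^3$ that is stable under conjugation by $\SL_2(\ZZ/8\ZZ)$. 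The conjugation action of $\SL_2(\FF_2) \isom S_3$ on the trace-zero matrices $A$ fixes only the identity class among nonzero elements. Hence the unique $\SL_2$-stable subgroup of order $2$ is the scalar one, which forces $\Gamma_4 = \Gamma_s(8)$ with no multiplier computation at all. With the identification of groups in hand, the equality $\cW_4 = X_s(8)$ follows because $\cW_4$ is the modular curve $X(\Gamma_4)$.
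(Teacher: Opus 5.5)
Your proof is correct, and its skeleton matches the paper's. You have $\Gamma_4 \subseteq \Gamma_2 = \Gamma(4)$ with index $4$ by Proposition~\ref{prop:Weber_groups}, and $\Gamma(8) \subseteq \Gamma_4$, so $\Gamma_4/\Gamma(8)$ has order $2$. It then remains to identify this order-$2$ group with the scalar class $\{I,5I\}$. That class is all of $\Gamma_s(8)/\Gamma(8)$ in $\PSL_2$, because every unit mod $8$ is its own inverse, so diagonal matrices in $\SL_2(\ZZ/8\ZZ)$ are scalar; it is worth saying this explicitly.

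The paper settles the last step by an explicit calculation with the images of $S$ and $T$. Your fallback uses only normality: $\Gamma_4/\Gamma(8)$ is a conjugation-stable subgroup of order $2$ of $\Gamma(4)/\Gamma(8) \isom \mathfrak{sl}_2(\FF_2)$, and $I$ is the only nonzero element fixed by $\SL_2(\FF_2)$. That argument is valid and needs no multipliers. However, it gets $5I \in \Gamma_4$ solely from the degree count. The explicit check proves $\Gamma_s(8) \subseteq \Gamma_4$ directly, needs the degree only for equality, and so also serves as a consistency check on Proposition~\ref{prop:Weber_groups}.

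You also overestimate the cost of the explicit route. In $\PSL_2(\ZZ)$ one has $\left(\begin{smallmatrix}5&8\\8&13\end{smallmatrix}\right) = (ST^{-1}ST)^3$, because $ST^{-1}S = \pm\left(\begin{smallmatrix}1&0\\1&1\end{smallmatrix}\right)$ and $\left(\begin{smallmatrix}1&1\\1&2\end{smallmatrix}\right)^3 = \left(\begin{smallmatrix}5&8\\8&13\end{smallmatrix}\right)$. On sixth powers, $S$ acts by $(x_0,x_1,x_2) \mapsto (x_0,-ix_2,ix_1)$ and $T$ by $(x_0,x_1,x_2) \mapsto (-x_1,ix_0,ix_2)$. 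Composing these, $ST^{-1}ST$ acts by a $3$-cycle of the $\uu_k^6$ with scalar factors $-i$, $1$, $i$. Their product is $1$, so its cube acts trivially, and no Dedekind sums are needed.

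Two small corrections:
\begin{itemize}
\item The action on the $\uu_k^6$ is $\iota$ with its entries raised to the sixth power (the $m=6$ representation written in the paper), not $\iota$ cubed.
\item A $q$-expansion in $q^{1/8}$ only gives $T^8 \in \Gamma_4$, not $\Gamma(8) \subseteq \Gamma_4$. To upgrade it, use that $\Gamma_4$ is normal, so every cusp width divides $8$, and that it contains $\Gamma(48)$; Wohlfahrt's theorem then yields $\Gamma(8) \subseteq \Gamma_4$. The paper simply asserts this inclusion as well.
\end{itemize}
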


\begin{proof}
  The triple $(\uu_0^{6},\uu_1^{6},\uu_2^{6})$ 
  is invariant
  under $\Gamma(8)$, but the quotient $\Gamma(4)/\Gamma(8)$ is isomorphic to
  $(\ZZ/2\ZZ)^3$, so that $\Gamma_4$ is an index~$2$ subgroup.
  An explicit calculation with the above images of the generators, shows that
  $\Gamma_4$ contains the diagonal subgroup, isomorphic to $(\ZZ/8\ZZ)^*$,
  hence equals $\Gamma_s(8)$.
\end{proof}

It remains to characterize the group $\Gamma_8$ under which the triple of functions
$(\uu_0^3,\uu_1^3,\uu_2^3)$ is invariant. This group is not the split Cartan subgroup
$\Gamma_s(16)$, but we can show that
$$
\Gamma(16) \subset \Gamma_8 \subset \Gamma_s(8)
= \left\{\;\left(\begin{array}{@{}cc@{}}a&b\\c&d\end{array}\right)
  \;:\; b \equiv c \equiv 0 \bmod 8 \right\},
$$
and that the group $\Gamma_8/\Gamma(16)$ is cyclic of order $4$ generated by
$$
T^2 U^2 T^{-2} U^{-2} \equiv \left(\begin{array}{@{}cc@{}}13&8\\8&5\end{array}\right) \bmod 16,
$$
where $U = STS^{-1}$.
The equality is easily verified in $\SL_2(\ZZ/16\ZZ)$ and the word expression on the
left maps to the identity under the above homomorphism to $\GL_3(\QQ(\zeta_{8}))$,
showing that the element is in the kernel of the action on $\cW_8$.  Moreover, the
matrix on the right lifts to $\SL_2(\ZZ)$.
Given that the degree of $\cW_8 \to \cW_4 = X_s(8)$ is $4$, and $X(16) \to X(8)$
is of degree $16$, this proves the following description of the kernel group $\Gamma_8$.

\begin{proposition}
  \label{prop:Weber_group8}
  The Weber kernel group $\Gamma_8$ is the group generated by $\Gamma(16)$ and
  $\left(\begin{array}{@{}cc@{}}13&8\\8&5\end{array}\right)$.
\end{proposition}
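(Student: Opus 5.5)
The plan is a sandwich argument: exhibit the group $H = \langle \Gamma(16), M\rangle$, with $M = \left(\begin{smallmatrix}13&8\\8&5\end{smallmatrix}\right)$, inside $\Gamma_8$, and then show the two groups have the same index in $\Gamma_s(8)$.

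First I will establish the outer containments $\Gamma(16) \subseteq \Gamma_8 \subseteq \Gamma_4 = \Gamma_s(8)$. The right-hand inclusion is immediate, since $\Gamma_8$ fixes $(\uu_0^3,\uu_1^3,\uu_2^3)$ and hence their squares. Proposition~\ref{prop:Weber_group4} then identifies $\Gamma_4$ with $\Gamma_s(8)$. For $\Gamma(16) \subseteq \Gamma_8$, I will argue as in the proofs of Propositions~\ref{prop:Weber_group2} and~\ref{prop:Weber_group4}. The cubes $\uu_i^3$ are, up to constants, $\eta$-quotients of the form $(\eta(\cdot)/\eta(\tau))^3$, and their $q$-expansions lie in $q^{1/16}$ times a power series in $q^{1/2}$. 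The standard transformation law of $\eta$ (the $24$-th root of unity multiplier) shows that these functions are invariant under $\Gamma(16)$. Alternatively, one can check directly that the images of generators of $\Gamma(16)$ under the representation into $\GL_3(\QQ(\zeta_8))$ are trivial. This reduces that representation to a finite computation modulo $16$.

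Next I will show $M \in \Gamma_8$. I will write $U = STS^{-1}$ and compute $\iota(T^2U^2T^{-2}U^{-2})$ with the matrices for $m=3$ given above. Since $\iota(T)^2$ and $\iota(U)^2$ are diagonal, they commute, so the commutator is the identity. Separately, I will check in $\SL_2(\ZZ/16\ZZ)$ that $T^2U^2T^{-2}U^{-2} \equiv M \pmod{16}$. Together with $\Gamma(16) \subseteq \Gamma_8$, this places $H$ inside $\Gamma_8$.

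Finally I will compare indices. In $\SL_2(\ZZ/16\ZZ)$, the image of $M$ satisfies $M^2 \equiv \left(\begin{smallmatrix}9&0\\0&9\end{smallmatrix}\right)$ and $M^4 \equiv 1$. Its image in $\PSL_2(\ZZ/16\ZZ)$ (modulo $\pm1$) still has order $4$, so $|H/\Gamma(16)| = 4$, where $\Gamma(16)$ is understood as containing $\pm 1$. On the other side, $\Gamma(8)/\Gamma(16)$ has order $2^3\cdot 2 = 16$ in $\PSL$ (being $(\ZZ/2)^3$ modulo $\pm$ adjustments). Also $\Gamma_s(8)/\Gamma(8) \isom (\ZZ/8\ZZ)^*/\{\pm1\}$ has order $2$. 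Hence $[\Gamma_s(8):\Gamma(16)] = 16\cdot 2 = 32$. Proposition~\ref{prop:Weber_groups} gives $[\Gamma_4:\Gamma_8] = |\nabla(\mmu_2^2)| = 4$, so $|\Gamma_8/\Gamma(16)| = 32/4 = 8$.

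This yields $8$ rather than $4$, so the count must be redone carefully, and I expect this bookkeeping to be the main obstacle. The delicate points are whether $\pm1$ lies in $\Gamma(16)$ inside $\PSL_2$, and the exact size of $\Gamma(8)/\Gamma(16)$, which is $2^3$. With $[\Gamma(8):\Gamma(16)] = 8$ and index $2$ for $\Gamma_s(8)$, the total is $[\Gamma_s(8):\Gamma(16)] = 16$ and $|\Gamma_8/\Gamma(16)| = 4 = |H/\Gamma(16)|$. Then $H \subseteq \Gamma_8$ with equal finite index over $\Gamma(16)$ forces $H = \Gamma_8$. I will settle this count first, computing explicitly in $\SL_2(\ZZ/16\ZZ)/\{\pm1\}$, before relying on the comparison.
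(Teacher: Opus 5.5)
Your argument is correct and is essentially the paper's proof. Both sandwich $\Gamma(16)\subseteq\Gamma_8\subseteq\Gamma_s(8)$, place the commutator $T^2U^2T^{-2}U^{-2}$ (which in fact equals $M$ exactly in $\SL_2(\ZZ)$) in the kernel because $\iota(T)^2$ and $\iota(U)^2$ are diagonal, and finish by comparing indices. Your corrected count is the right one and the first count of $32$ should simply be deleted: in $\PSL_2$ one has $[\Gamma(8):\Gamma(16)]=8$, hence $[\Gamma_s(8):\Gamma(16)]=16$, which is the $16$ the paper's degree comparison actually needs, namely the degree of $X(16)\to X_s(8)$, and so $[\Gamma_8:\Gamma(16)]=16/4=4=|\langle \bar M\rangle|$.
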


\noindent
In particular we note that $\Gamma_8$ contains the diagonal matrix in the center
of $\SL_2(\ZZ/16\ZZ)$:
$$
\left(\begin{array}{@{}cc@{}}13&8\\8&5\end{array}\right)^2 \equiv
\left(\begin{array}{@{}cc@{}}9&0\\0&9\end{array}\right) \in \SL_2(\ZZ/16\ZZ)
$$
hence contains the subgroup
$$
\Gamma(16,8,16) = \left\{
  \left(\begin{array}{@{}cc@{}}a&b\\c&d\end{array}\right) \in \SL_2(\ZZ) \;:\;
  a \equiv d \equiv 1 \bmod 8, b \equiv c \equiv 0 \bmod 16 \right\}. 
$$
Given that $\Gamma_s(8)/\Gamma(16)$ is an abelian group:
$$
\Gamma_s(8)/\Gamma(16) =
\left\langle\left(\begin{array}{@{}cc@{}}13&0\\0&5\end{array}\right)\ccomma
\left(\begin{array}{@{}cc@{}}1&8\\0&1\end{array}\right)\ccomma
\left(\begin{array}{@{}cc@{}}1&0\\8&1\end{array}\right)
\right\rangle \isom C_4 \times V_4,
$$
so that $\Gamma_s(8)/\Gamma(16,8,16) \isom C_2 \times V_4 = C_2^3$; we have
the following diagram of modular curves between $X(16)$ and $X_s(8)$, where
the curves represented by dots 
are intermediate quotients by the subgroups of $V_4 = \langle{T^8,U^8}\rangle$.
$$
\begin{tikzcd}[row sep=8mm] 
  & & X(16) \arrow[d,"\big\langle\binom{\,9\;0\,}{\,0\;9\,}\big\rangle"]\\
  & & X(16,8,16)
  \arrow[dll,bend right=16, end anchor={[xshift=-1mm]north east}]
  \arrow[dll,bend right=16, end anchor={[xshift=1mm]north}]
  \arrow[dll,bend right=16, end anchor={[xshift=3mm]north west}]
  \arrow[dl,bend right=8,end anchor={[xshift=2mm]north}]
  \arrow[d,"\big\langle\binom{\,13\;0\,}{\,\;0\;\;5\,}\big\rangle"]
  \arrow[dr,bend left=8,end anchor={[xshift=-2mm]north}]
  \arrow[drr,bend left=16,"\big\langle\binom{\,13\;8\,}{\,\;8\;\;5\,}\big\rangle",end anchor=north west] \\
\bullet\;\bullet\;\bullet
  \arrow[d,start anchor={[xshift=-2.5mm]south}, end anchor={[xshift=-1mm]north}]
  \arrow[d,start anchor={south}]
  \arrow[d,start anchor={[xshift=+2.5mm]south}, end anchor={[xshift=+1mm]north}]
  & X\big(\Gamma(16),\binom{\,13\;8\,}{\,\;0\;\;5\,}\big)
  \arrow[d,end anchor={[xshift=-2.5mm]north},start anchor={[xshift=-1mm]south}]
  \arrow[d]
  \arrow[d,end anchor={[xshift=2.5mm]north},start anchor={[xshift=1mm]south}]
  & X_s(16)
  \arrow[d,end anchor={[xshift=-2.5mm]north},start anchor={[xshift=-1mm]south}]
  \arrow[d]
  \arrow[d,end anchor={[xshift=2.5mm]north},start anchor={[xshift=1mm]south}]
  & X\big(\Gamma(16),\binom{\,13\;0\,}{\,\;8\;\;5\,}\big)
  \arrow[d,end anchor={[xshift=-2.5mm]north},start anchor={[xshift=-1mm]south}]
  \arrow[d]
  \arrow[d,end anchor={[xshift=2.5mm]north},start anchor={[xshift=1mm]south}]
  & \cW_8
  \arrow[d,end anchor={[xshift=-2.5mm]north},start anchor={[xshift=-1mm]south}]
  \arrow[d]
  \arrow[d,end anchor={[xshift=2.5mm]north},start anchor={[xshift=1mm]south}]\\
X(8) \arrow[drr,bend right=8,"\big\langle\binom{\,5\;0\,}{\,0\;5\,}\big\rangle" swap,start anchor={[xshift=-1mm]south east}]
  & \bullet\ \bullet\ \bullet
  \arrow[dr,bend right=8,start anchor={[xshift=-1.5mm]south},end anchor={[xshift=+0.50mm,yshift=-1mm]north west}]
  \arrow[dr,bend right=8,start anchor={[xshift=+1.0mm]south},end anchor={[xshift=+0.75mm,yshift=-0.5mm]north west}]
  \arrow[dr,bend right=8,start anchor={[xshift=+3.5mm]south},end anchor={[xshift=+1.00mm]north west}]
  & \bullet\ \bullet\ \bullet
  \arrow[d,start anchor={[xshift=-2.5mm]south},end anchor={[xshift=-1mm]north}]
  \arrow[d]
  \arrow[d,start anchor={[xshift=+2.5mm]south},end anchor={[xshift=+1mm]north}]
  & \bullet\ \bullet\ \bullet
  \arrow[dl,bend left=8,start anchor={[xshift=-3.5mm]south},end anchor={[xshift=-1mm]north east}]
  \arrow[dl,bend left=8,start anchor={[xshift=-1.0mm]south},end anchor={[xshift=-0.75mm,yshift=-0.5mm]north east}]
  \arrow[dl,bend left=8,start anchor={[xshift=+1.5mm]south},end anchor={[xshift=-0.50mm,yshift=-1mm]north east}]
  & \bullet\ \bullet\ \bullet
  \arrow[dll,bend left=8,start anchor={[xshift=-3.5mm]south},end anchor={[yshift=+0.5mm]east}]
  \arrow[dll,bend left=8,start anchor={[xshift=-1.0mm]south},end anchor={[yshift=+0mm]east}]
  \arrow[dll,bend left=8,start anchor={[xshift=+1.5mm]south},end anchor={[yshift=-0.5mm]east}]\\
  & & X_s(8)
\end{tikzcd}
$$
In what follows we will show that the supersingular points split completely over $\FF_{p^2}$
on the quotients of $X(16,8,16)$ not covering $X(8)$, for every odd prime $p$.

\subsection*{Supersingular fields of definition}

\begin{theorem}\label{thm:ss_moduli_field_X0}
For any positive integer $N$, the supersingular invariants on the modular curve $X_0(N)$ are defined over\space\space$\FF_{p^2}$, and if $p \equiv \pm1 \bmod N$, then the supersingular invariants also split over\space\space$\FF_{p^2}$ on $X_1(N)$.
\end{theorem}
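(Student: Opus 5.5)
The plan is to work moduli-theoretically: a point of $X_0(N)$ over $\overline{\FF}_p$ is a pair $(E,C)$ with $C$ cyclic of order $N$, and a point of $X_1(N)$ is a pair $(E,P)$ with $P$ of order $N$, both up to isomorphism (and up to $P \mapsto -P$ on the coarse space). For $p \nmid N$ the key input is the standard fact that every supersingular $j$-invariant lies in $\FF_{p^2}$. Moreover, each supersingular class has a model $E/\FF_{p^2}$ whose $p^2$-power Frobenius $\pi$ equals the scalar $[-p]$ (or $[p]$) in $\mathrm{End}(E)$. One can take $E$ with $\#E(\FF_{p^2}) = (p+1)^2$, which exists for every supersingular $j$ by twisting: the trace of $\pi$ is $\pm 2p$ and $\pi^2 - t\pi + p^2 = 0$ forces $\pi = \pm p$ once $\pi$ is an integer. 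I would state this reduction first, citing Deuring/Waterhouse.

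With such a model, $\pi$ acts on $E[N]$ as the scalar $-p$ (or $p$). A scalar preserves every subgroup, so every cyclic $C \subset E[N]$ is $\pi$-stable, hence Galois-stable. Thus $(E,C)$ is defined over $\FF_{p^2}$, and its moduli point on $X_0(N)$ is $\FF_{p^2}$-rational. For $X_1(N)$, $\pi(P) = \pm pP$, and this equals $\pm P$ exactly when $p \equiv \pm 1 \bmod N$. In that case each point $P$ satisfies $\pi(P) = \pm P$. A point with $\pi(P) = -P$ becomes rational on the quadratic twist, or simply gives an $\FF_{p^2}$-rational point on the coarse space, since $X_1(N)$ identifies $P$ with $-P$. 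Hence all the moduli points above supersingular $j$ are $\FF_{p^2}$-rational, which gives splitting. I would also note that the number of such points equals the full fiber degree, so "split completely" holds, with the ramified points over $j = 0, 1728$ handled by the same rationality.

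The obstacle I expect is making the moduli interpretation rigorous at the points with extra automorphisms ($j = 0, 1728$), and in the case where $p \mid N$. At the automorphism points, rationality of the coarse point still follows from the Galois-stability of $(E,C)$. When $p \mid N$, $X_0(N)$ should be interpreted via Drinfeld level structures or the Deligne--Rapoport model. There the supersingular points of $X_0(p^k M)$ lie on the special fibre, with the $p$-part of the kernel unique (the kernel of a Frobenius power, which is defined over $\FF_p$ data). I would reduce to the prime-to-$p$ part $M$ by this observation and then apply the scalar-Frobenius argument. The congruence $p \equiv \pm 1 \bmod N$ already forces $p \nmid N$ for the $X_1$ claim.
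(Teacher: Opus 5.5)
Your proposal is correct and is essentially the paper's argument: take a model over $\FF_{p^2}$ on which the $p^2$-Frobenius acts on $E[N]$ as the scalar $-p$, so every cyclic subgroup is Galois-stable, and when $p \equiv \pm 1 \bmod N$ every point of order $N$ is fixed up to sign, i.e.\ up to quadratic twist. The differences are minor: the paper gets scalarity from Frobenius commuting with $\cO/N\cO \isom \MM_2(\ZZ/N\ZZ)$ rather than from the trace $-2p$ (where $(\pi+p)^2=0$ in the domain $\mathrm{End}(E)$ already gives $\pi=-p$, without assuming $\pi$ is an integer), and your remarks on $p \mid N$ cover a case that the paper's argument tacitly excludes.
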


\begin{proof}
  For any elliptic curve $E$ in the isogeny class of a curve over $\FF_p$,
  the full endomorphism ring $\cO$ is defined over $\FF_{p^2}$.
  Since the action of $\cO/N\cO \isom \MM_2(\ZZ/N\ZZ)$ on the $E[N]$ is
  defined over $\FF_{p^2}$, it follows that the Galois action on $E[N]$,
  which commutes with $\cO/N\cO$, acts through the center $(\ZZ/N\ZZ)^*$
  of $\GL_2(\ZZ/N\ZZ)$, and more precisely, Frobenius acts as $-p$ on $E[N]$.
  Consequently, the lines are Galois stable and every cyclic $N$-isogeny
  is defined over $\FF_{p^2}$. In view of the action of Frobenius, if
  $p \equiv \pm1 \bmod N$, the Galois action on the $N$-torsion of $E$
  or its twist is trivial, so the supersingular moduli are defined in
  $\FF_{p^2}$.
\end{proof}

\noindent{\bf Remark.} Equivalently, for $X_0(N)$ we can state that every
supersingular $j$-invariant $j_0$ splits completely under the map
$X_0(N) \to X(1)$, or that the polynomial $\Phi_N(x,j_0)$ splits completely,
where $\Phi_N(x,y)$ is the classical modular polynomial.
For $X_1(N)$, the splitting of the supersingular points is recognized
by the factorization of the $N$-division polynomial $\psi_N$.
\vspace{1mm}

As a consequence, the split Cartan modular curve $X_s(N)$, defined by the
congruence subgroup
$$
\Gamma_s(N) = \left\{\;
  \left(\begin{array}{@{}cc@{}}a&b\\c&d\end{array}\right)
  \;:\; b \equiv c \equiv 0 \bmod N \right\},
$$
parametrizing elliptic curves with a disjoint pair of cyclic $N$-isogenies,
also splits the supersingular moduli.

\begin{corollary}
  \label{cor:ss_moduli_field_split_Cartan}
  For any positive integer $N$, then the supersingular invariants on the
  split Cartan modular curve $X_s(N)$ are defined over $\FF_{p^2}$.
  In particular if $p \equiv \pm1 \bmod N$, then the supersingular invariants
  on the modular curve $X(N)$ are defined over $\FF_{p^2}$.
\end{corollary}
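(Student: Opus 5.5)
The plan is to reduce the corollary to the Frobenius argument in the proof of Theorem~\ref{thm:ss_moduli_field_X0}. That argument shows that the Galois action on $E[N]$ is scalar for a supersingular $E$. The first step is to fix a supersingular elliptic curve $E$ over $\FF_{p^2}$ whose $p^2$-Frobenius is the scalar $-p$. Such a model exists: take a curve in the isogeny class of a curve over $\FF_p$ with trace zero, whose Frobenius satisfies $\pi^2 = -p$ and is defined over $\FF_{p^2}$. In that setting $\mathrm{End}(E) = \cO$ is defined over $\FF_{p^2}$ and $\pi_{p^2} = -p$ lies in the center. Every supersingular $j$-invariant lies in $\FF_{p^2}$ and admits such a model. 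Hence $\mathrm{Gal}(\overline{\FF}_p/\FF_{p^2})$ acts on $E[N]$ through the scalar $-p \in (\ZZ/N\ZZ)^*$.

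Next I would describe points of $X_s(N)$ above $j(E)$ as pairs $\{C_1, C_2\}$ of cyclic subgroups of order $N$ with $C_1 \oplus C_2 = E[N]$, taken modulo $\mathrm{Aut}(E)$. Equivalently, these are orbits of $\Gamma_s(N)$-level structures, i.e.\ bases up to the diagonal torus. A scalar matrix lies in the split Cartan subgroup and stabilizes each $C_i$. So the Frobenius action on this level structure is trivial, and the moduli point is fixed by $\mathrm{Gal}(\overline{\FF}_p/\FF_{p^2})$. Therefore every supersingular point of $X_s(N)$ is $\FF_{p^2}$-rational. One could also argue directly from Theorem~\ref{thm:ss_moduli_field_X0}: each $C_i$ is an $\FF_{p^2}$-rational cyclic isogeny kernel, so the pair is rational as well.

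For the second claim, suppose $p \equiv \pm 1 \bmod N$. Then $-p \equiv \mp 1$, so Frobenius acts on $E[N]$ as $\pm 1$. A point of $X(N)$ is a full level structure modulo $\pm 1$, as the paper works with $\PSL_2$ quotients. Then $-1$ acts trivially on such a structure, so a basis of $E[N]$ up to sign is fixed. Alternatively, one can pass to the quadratic twist where Frobenius acts as $+1$, exactly as in the last sentence of the proof of Theorem~\ref{thm:ss_moduli_field_X0}. Either way the supersingular points of $X(N)$ are $\FF_{p^2}$-rational.

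The main subtlety is the bookkeeping of automorphisms and of the field of definition of the moduli point versus the curve. I need to make sure that rationality of the level structure on one chosen model implies rationality of the moduli point. Extra automorphisms at $j = 0, 1728$ only enlarge the identifications, so the claim survives there. I also need to check that $X(N)$ here means the curve with $\zeta_N$ not fixed, i.e.\ $\Gamma(N)$ up to sign, so that the determinant/Weil pairing condition does not obstruct. Since $\zeta_N \in \FF_{p^2}$ whenever $p \equiv \pm1 \bmod N$, the determinant condition is also satisfied, and I expect this to go through.
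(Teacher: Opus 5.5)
Your proposal is correct and follows the paper's argument: the $p^2$-Frobenius acts on $E[N]$ as the scalar $-p$, so each cyclic subgroup in a split Cartan structure is Galois-stable. When $p \equiv \pm1 \bmod N$, Frobenius acts as $\pm1$ and fixes a basis up to sign, or after a quadratic twist. Your extra care about the model with $\pi_{p^2}=-p$, the Weil pairing ($\zeta_N\in\FF_{p^2}$), and the tacit hypothesis $p\nmid N$ spells out what the paper's two-line proof leaves implicit.
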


\begin{proof}
  The first statement follows from the splitting of $N$-isogenies over $\FF_{p^2}$.
  In addition if $p \equiv \pm1 \bmod N$, the points of each kernel are fixed,
  hence a basis is defined over $\FF_{p^2}$ (up to twist).
\end{proof}

\noindent{\bf Remark.} For the levels $N$ in $\{1,2,3,4,6\}$, the unit
group $(\ZZ/N\ZZ)^*/\{\pm1\}$ is trivial so the supersingular points
split for all $p$. This corresponds to the geometric equalities $X_1(N) = X_0(N)$
and $X(N) = X_s(N)$.
\vspace{2mm}

The Weber moduli are functions on $X(48)$ which map through $\cW_{24}$.
To show the splitting of supersingular points on $\cW_{24}$ it suffices to prove
it for $\cW_3$ and $\cW_8$. However, $X(6)$ covers $\cW_3$, so the supersingular
moduli on $\cW_3$ split over $\FF_{p^2}$ by the previous theorem. To prove that
they split on $\cW_8$ it is necessary to consider the factorization
$$
\begin{tikzcd}
  X(16,8,16) \arrow[d,"\big\langle\binom{\,13\;8\,}{\,\;8\;\;5\,}\big\rangle" swap]\arrow[r]
  & X(8) \arrow[d,"\big\langle\binom{\,5\;0\,}{\,0\;5\,}\big\rangle"]\\
  \cW_8 \arrow[r] & X_s(8),
\end{tikzcd}
$$
where $X(16,8,16)$ is the quotient of $X(16)$ by the diagonal matrix group
$\langle\pm 9I_2\rangle \subset \SL_2(\ZZ/16\ZZ)/\{\pm1\}$.

The supersingular points split in $X_s(8)$ by the previous theorem.
On the other hand, for the classes $p \bmod 8$ in the coset
$\{ \pm5 \} \subset (\ZZ/8\ZZ)^*/\{\pm1\}$ form an obstruction to lifting
supersingular points to $X(8)$ over $\FF_{p^2}$.
Clearly, since $\langle{9I_2}\rangle \subset \Gamma(16,8,16)/\Gamma(16)$,
for the primes $p$ such that $p \bmod 16$ lie in the kernel
$$
\langle{-1,9}\rangle = \{\pm1,\pm9\} \subset (\ZZ/16)^*/\{\pm1\} \longrightarrow (\ZZ/8)^*/\{\pm1\},
$$
the supersingular invariants in $X(16,8,16)$ split over $\FF_{p^2}$.  It remains
to show that the obstruction vanishes also on the coset $\{\pm3,\pm5\}$.  However, this
follows since the subgroup of $\Gamma_8/\Gamma(16)$ surjects on the diagonal subgroup
of $\Gamma_s(8)/\Gamma(8)$:
$$
\left\langle\left(\begin{array}{@{}cc@{}}13&8\\8&5\end{array}\right)\right\rangle
  \longrightarrow
\left\langle\left(\begin{array}{@{}cc@{}}5&0\\0&5\end{array}\right)\right\rangle
$$
under $\SL_2(\ZZ/16\ZZ) \to \SL_2(\ZZ/8\ZZ)$, corresponding to the fact that $\cW_8$
does not factor through $X(8)$.  This establishes the following theorem.

\begin{theorem}
  \label{thm:weber_field}
  The supersingular Weber invariants on $\cW_{24}$ are defined over $\FF_{p^2}$.
\end{theorem}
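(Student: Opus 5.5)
Since $\Gamma_{24} = \Gamma_3 \cap \Gamma_8$, the curve $\cW_{24}$ is the normalization of the fibre product $\cW_3 \times_{X(1)} \cW_8$. A supersingular point of $\cW_{24}$ is determined by its images on $\cW_3$ and $\cW_8$. It is therefore defined over $\FF_{p^2}$ as soon as both images are, because the Galois orbit of the point over $\FF_{p^2}$ embeds in the product of the orbits of its images. This reduces the statement to the two curves $\cW_3$ and $\cW_8$ separately. Concretely, I would show that for each supersingular $E/\FF_{p^2}$, chosen in the twist class where Frobenius acts as the scalar $-p$ on all torsion (as in the proof of Theorem~\ref{thm:ss_moduli_field_X0}), the Frobenius element lies in the relevant kernel group modulo $\pm1$.

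For $\cW_3$ the argument is immediate. By Proposition~\ref{prop:Weber_group3}, $\Gamma_3 \supseteq \Gamma(6)$, so $X(6)$ covers $\cW_3$. By Corollary~\ref{cor:ss_moduli_field_split_Cartan}, applied with $N=6$ and using $(\ZZ/6\ZZ)^*=\{\pm1\}$, the supersingular points of $X(6)$ are defined over $\FF_{p^2}$, and hence so are their images on $\cW_3$.

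For $\cW_8$, Proposition~\ref{prop:Weber_group8} gives $\Gamma(16)\subset\Gamma_8$. The Frobenius of $E$ acts on $E[16]$ as the scalar $-p$, so the supersingular point is fixed exactly when the scalar matrix $pI_2$ lies in $\pm\Gamma_8/\Gamma(16)$. I would split into cases according to $p \bmod 16$ in $(\ZZ/16\ZZ)^*/\{\pm1\}=\{\pm1,\pm3,\pm5,\pm7\}$.

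The case $p\equiv\pm1$ is trivial. The case $p\equiv\pm7\equiv\mp9$ follows because $9I_2 = \binom{13\;8}{8\;5}^2 \in \Gamma_8$.

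The main obstacle is the classes $\pm3,\pm5$. There $pI_2$ is not in $\Gamma_8$: the scalar $5I_2$ reduces to $5I_2$ mod $8$, whereas $\binom{13\;8}{8\;5}$ is not scalar mod $16$. So Frobenius moves the chosen level-$16$ structure, and I would argue that it moves the point only within its fibre over $\FF_{p^2}$, using the non-scalar element. The endomorphism ring $\cO$ is defined over $\FF_{p^2}$ and $\cO/16\cO\isom\MM_2(\ZZ/16\ZZ)$. I would therefore replace the Frobenius $\pi=-p$ by $\pi\alpha$, where $\alpha\in\cO$ is an endomorphism of degree prime to $p$ whose reduction mod $16$ is $\binom{13\;8}{8\;5}\cdot(5I_2)^{-1}$ up to sign. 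Composing with $\alpha$ identifies $E$ with an isogenous supersingular curve compatibly with the level structure modulo $\Gamma_8$.

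The cleaner way to phrase this is as the paper's diagram suggests. The element $\binom{13\;8}{8\;5}$ maps onto $\binom{5\;0}{0\;5}$ under reduction to $\SL_2(\ZZ/8\ZZ)$, and $\cW_8 \to X_s(8)$ has Galois group $\Gamma_s(8)/\Gamma_8$, into which $5I_2$ does not inject as an obstruction. On $X_s(8)$ the supersingular points are rational over $\FF_{p^2}$ by Corollary~\ref{cor:ss_moduli_field_split_Cartan}. Their fibres in $\cW_8$ are torsors under the abelian group $V_4 = \langle T^8,U^8\rangle$ image, which has exponent $2$. Frobenius acts on a fibre through an element of this group determined by $p \bmod 16$, and for $p\equiv\pm3,\pm5$ that element is the class of $5I_2$ modulo $\Gamma_8$. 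Since $5I_2 \equiv \binom{13\;8}{8\;5}$ modulo $\Gamma(16)\cdot\langle T^8, U^8\rangle$, it is trivial in $\Gamma_s(8)/\Gamma_8$ precisely because $\binom{13\;8}{8\;5}$ surjects onto $\langle 5I_2\rangle$ mod $8$. I expect verifying this matrix congruence, and checking that the twist ambiguity $\pm1$ is absorbed, to be the delicate step. With it, Frobenius acts trivially on every supersingular fibre of $\cW_8$, and combining with the $\cW_3$ case yields the theorem.
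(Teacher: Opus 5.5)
Your outline is the paper's: reduce to $\cW_3$ and $\cW_8$, handle $\cW_3$ via $X(6)$, handle $p\equiv\pm1,\pm7 \pmod{16}$ via $9I_2=M^2$ with $M=\binom{13\;8}{8\;5}$, and for $p\equiv\pm3,\pm5$ appeal to $\langle M\rangle\to\langle 5I_2\rangle$. But when you make that last step explicit, it fails.

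\textbf{The Frobenius element is misidentified.} For these $p$, $\det(pI_2)=p^2\equiv 9 \pmod{16}$. So $pI_2\notin\SL_2(\ZZ/16\ZZ)$, and it has no class in $\Gamma_s(8)/\Gamma_8$. Your criterion ``$pI_2\in\pm\Gamma_8/\Gamma(16)$'' only makes sense when $p^2\equiv1\pmod{16}$.

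Suppose instead you use the $\SL_2$-lift $\mathrm{diag}(13,5)$ of $5I_2 \bmod 8$. Then $\mathrm{diag}(13,5)\equiv M\,T^8U^8\pmod{16}$. By Proposition~\ref{prop:Weber_group_action}, $T^8$ and $U^8=ST^8S^{-1}$ act on $(\uu_0^3,\uu_1^3,\uu_2^3)$ by the signs $(-1,-1,1)$ and $(-1,1,-1)$. Hence $\mathrm{diag}(13,5)$ acts by $(1,-1,-1)$, and it is \emph{nontrivial} in $\Gamma_s(8)/\Gamma_8\isom V_4$.

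Your inference ``$\equiv M$ modulo $\Gamma(16)\langle T^8,U^8\rangle$, hence trivial in $\Gamma_s(8)/\Gamma_8$'' therefore proves nothing. The group $\langle T^8,U^8\rangle$ surjects onto $\Gamma_s(8)/\Gamma_8$, which is exactly the group you say the fibres are torsors under. The surjection $\langle M\rangle\to\langle 5I_2\rangle$ only says $\Gamma_8\Gamma(8)=\Gamma_s(8)$; it does not identify how Frobenius acts on a fibre.

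The $\pi\alpha$ device is also invalid. Rationality is decided by Frobenius alone, and composing the level structure with a non-invertible endomorphism replaces the point by a Hecke translate.

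\textbf{The missing input is arithmetic.} No argument using only $\Gamma_8$ can supply it. The model of $\cW_8$ built from the unnormalized $(\ff^3,\ff_1^3,\ff_2^3)$ has the same kernel group $\Gamma_8$. Its supersingular points differ from those of $\cW_8$ by the factors $\zeta_{16}^{\pm3}$, which lie outside $\FF_{p^2}$ when $p\equiv\pm3\pmod 8$. So at most one of the two models can have $\FF_{p^2}$-rational supersingular points, and any $\Gamma_8$-only argument would apply to both.

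To close the gap, proceed as follows.
\begin{itemize}
\item Modulo $\pm\Gamma_8$ you can write $pI_2\equiv 3I_2=\mathrm{diag}(1,9)\cdot\mathrm{diag}(3,11)$, using $7I_2=-M^2$.
\item The factor $\mathrm{diag}(3,11)=-\mathrm{diag}(13,5)$ acts by $(1,-1,-1)$, as computed above.
\item The factor $\mathrm{diag}(1,9)$ acts on $q$-expansion coefficients by $\zeta_{16}\mapsto\zeta_{16}^9=-\zeta_{16}$ and fixes $\sqrt2$. Because of the normalizations $\uu_1=\zeta_{16}\ff_1$ and $\uu_2=\zeta_{16}^{-1}\ff_2$, it also acts by $(1,-1,-1)$.
\end{itemize}
The two signs cancel, so Frobenius fixes every supersingular point of $\cW_8$.

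As a check: for $p=13$ and $j_0=5$, each root $y$ of $(y-16)^3-5y$ satisfies $y^7=1$, so each is a $24$-th power in $\FF_{169}$. The paper's one-sentence appeal to the surjection also leaves this computation implicit, but it is what actually makes the case $p\equiv\pm3,\pm5$ work.
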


\noindent{\bf Remark.} A point $(u_0,u_1,u_2)$ on $\cW_{24}$ over the $j$-invariant
$j_0$ consists of a triple of common roots of the polynomial
$
(x^{24}-16)^3 - j_0 x^{24},
$
and the set of roots is precisely $\{\, \zeta_{24}^i u_j \;:\; 0 \le i < 24, 0 \le j < 3\, \}$.
The property that $j_0$ splits completely under $\cW_{24} \to X(1)$ over $\FF_{p^2}$
is equivalent to this polynomial splitting completely over $\FF_{p^2}$.

\section*{Elliptic curves over Weber and Fermat curves}
\label{EllipticSurfaces}

We now turn to the problem of explicit families of elliptic curves over Weber curves
$\cW_n$ and Fermat curves $\cF_n$ and the isogeny structures that they parametrize.
We construct models over the quotients $\cW_n \longrightarrow \cX_n$ and
$\cF_n \longrightarrow \cY_n$.
For each $n$ dividing $8$, we have an isomorphism $\cW_n \isom \cF_n$ (given explicitly
in the Appendix), but this does not imply a morphism between $\cX_n$ and $\cY_n$,
except for $n = 1$, we have seen in~\eqref{eqn:Weber1-Fermat1-isom} the isomorphisms
$$
\begin{tikzcd}
  \cW_1 \arrow[r,"\isom"] & \cF_1 \arrow[r,"\isom"] & \cY_1,
\end{tikzcd}
$$
which implies a morphism $X(2) \isom \cY_1 \longrightarrow \cX_1 \isom X_0(2)$.

We have also seen that the family~\eqref{eqn:E0/Xn} of elliptic curves
$$
\cE_0/\cX_n: y^2 = x\left(x^2 - \frac{(u^n - 64)}{4}x - (u^n - 64)\right)\ccomma
$$
is parametrized by the Weber modular curve $\cX_n$, hence by base extension by $\cW_n$.
We also recall from~\eqref{eqn:C0/Y1-C1/Y1} that the affine Fermat curve
$\cF_n : s^n + t^n + 1 = 0$ parametrizes the following model through its quotient
to $\cY_n$:
$$
\cC_0/\cY_n:y^2 = x(x - 1)(x + t^n) = x(x - 1)(x - s^n - 1),
$$
In order to set up the notation for the extension of base field, from $K(\cY_1)$ to
$K(\cY_8)$, we set $t_3 = t$ a generator for $K(\cY_8)$ and $t_{k-1} = t_{k}^2$
for $1 \le k \le 3$, such that for a field $K$,
$$
\begin{tikzcd}
  K(\cF_1) \isom K(\cW_1) \arrow[d]\arrow[r,hook] &
  K(\cF_2) \isom K(\cW_2) \arrow[d]\arrow[r,hook] &
  K(\cF_4) \isom K(\cW_4) \arrow[d]\arrow[r,hook] &
  K(\cF_8) \isom K(\cW_8) \arrow[d] \\
  K(\cY_1) = K(t_0) \arrow[r,hook] & K(\cY_2) = K(t_1) \arrow[r,hook] & K(\cY_4) = K(t_2) \arrow[r,hook] & K(\cY_{8}) = K(t_3)
\end{tikzcd}
$$
is a sequence of quadratic extensions $K(t_k)/K(t_{k-1})$, which will permit us
to build a chain of $2$-isogenies.  We note that the final isomorphism
$K(\cF_8) \isom K(\cW_8)$ requires a square root of $2$ in $K$, and moreover
the chain of isogenies which follow require an $8$-th root of unity $\zeta_8$,
for which we set $i = \zeta_8^2$, so we assume $K$ contains a homomorphic
image of $\ZZ[\zeta_8]$.

\subsection*{Explicit $2$-isogeny chains}
We prioritize the Fermat model as the base curve parametrizing chains of $2$-isogenies,
with a view to constructing explicit equations for the isogeny chains from $\cC_0$.
We describe a $2$-isogeny chain over $K(\cY_8)$, starting from $\cC_0$, of the form
$$
\begin{tikzcd}
  \cC_0 \arrow[r,"\phi_0"] & \cC_1 \arrow[r,"\phi_1"] & \cC_2 \arrow[r,"\phi_2"] & \cC_3.
\end{tikzcd}
$$
The base extension to $K(\cF_8)$ gives rise to an explicit action on the chains
by the automorphism group of $K(\cF_8)/K(X(1))$, which,
by Proposition~\ref{prop:Weber_groups}, is isomorphic to $S_3 \ltimes \nabla(\mmu_8^2)$.

We normalize the curves $\cC_i$ and isogenies $\phi_i$ of the chain
such that the isogenies are defined successive quotients by a $2$-torsion
point $(0,0)$.  
On the curve $\cC_k$ we give, for each $k$ in $\{0,1\}$, a point $T_k \in \cC_k[4]$
such that $2T_k = (0,0)$ and $\phi_k(T_k) = (0,0)$ on the codomain $\cC_{k+1}$.
The quotient by $(0,0)$ in $\cC_0[2]$ gives the $2$-isogeny:
$$
\begin{tikzcd}[row sep=0mm]
  \phi_0: \cC_0: y^2 = x(x - 1)(x + t_0) \arrow[r] & \cC_1: y^2 = x(x + 4c_0)(x + e_0^2)\\
  (x,y) \arrow[r,mapsto] & \dsp\left(\frac{(x-c_0)^2}{x}, \frac{x^2-c_0^2}{x^2}y\right)
\end{tikzcd}
$$
where $c_0 = it_1$ and $e_0 = t_1+i$.
The point $T_0 = (c_0,c_0 e_0) \in \cC_0[4]$ satisfies $2T_0 = (0,0) \in \cC_0[2]$
and $\phi_0(T_0) = (0,0) \in \cC_1[2]$.
The quotient by $(0,0)$ in $\cC_1[2]$ gives the next step in the $2$-isogeny chain:
$$
\begin{tikzcd}[row sep=0mm]
  \phi_1: \cC_1: y^2 = x(x + 4c_0)(x + e_0^2) \arrow[r] & \cC_2: y^2 = x(x + 4c_1))(x + e_1^2)\\
  (x,y) \arrow[r,mapsto] & \dsp\left(\frac{(x-c_1)^2}{x}, \frac{x^2-c_1^2}{x^2}y\right)
\end{tikzcd}
$$
where $c_1 = 2\zeta_8t_2(t_1+i)$ and $e_1 = (t_2 + \zeta_8)^2$.
The point $T_1 = (c_1, c_1 e_1)$ satisfies $2T_1 = (0,0) \in \cC_1[2]$ and
$\phi_1(T_1) = (0,0) \in \cC_2[2]$.
Finally, the quotient by $(0,0)$ in $\cC_2[2]$ gives the $2$-isogeny:
$$
\begin{tikzcd}[row sep=0mm]
  \phi_2: \cC_2: y^2 = x(x + 4c_1)(x + e_1^2) \arrow[r] & \cC_3: y^2 = (x^2 + 4c_2)(x + c_3)\\
  (x,y) \arrow[r,mapsto] & \dsp\left(\frac{x^2-c_2}{x}, \frac{x^2+c_2}{x^2}y\right)
\end{tikzcd}
$$
where $c_2 = -4c_1e_1^2$ and $c_3 = e_1^2 + 4c_1$.
In terms of the coordinates $(s_3,t_3) = (s_3:t_3:1)$,
the coordinate permutations and scalar multiplications
$(s_3,t_3) \mapsto (\zeta_8^{i_1} s_3,\zeta_8^{i_2} t_3)$
permute all possible $2$-isogeny chains from $\cC_0$.

\subsection*{Explicit $\ell$-isogeny chains}

Unlike for $\ell = 2$, the Weber functions do not parametrize a
$\Gamma_0(\ell)$-structure for any odd prime $\ell$.
Even on $\cW_{3n}$, the nonsplit Cartan structure $\Gamma_{ns}^+(3)$
determined by the Weber curve is independent of a $\Gamma_0(3)$
structure parametrizing $3$-isogenies.
An $\ell$-isogeny chain is parametrized by a sequence of moduli
$(u_0,u_1,\dots,u_r)$ which are successive roots
$\Phi_\ell(u_{i-1},u_i) = 0$ of the level-$\ell$ modular polynomial
$\Phi_\ell(x,y)$ with respect to $\cX_{24}$
(or when $\ell = 3$, of one of the quotients, $\cX_8$ or $\cY_8$
of $\cW_8 \isom \cF_8$).

\section{Weber supersingular modules}

Mestre's method of graphs~\cite{Mestre86} permits one
to construct a {\it supersingular Hecke module} -- a free abelian
group on supersingular points equipped with Hecke operators,
given by correspondences on the underlying modular curves.
In terms of the supersingular isogeny graphs, the Hecke
operator $T_\ell$ arises as the adjacency matrices of
the $\ell$-isogeny graph.
Mestre's construction was defined for modular curves of level~1,
or one of the models $X_0(m)$ of genus $0$ given by quotients
of the Dedekind $\eta$ function, for $m \in \{2,3,4,5,7,13\}$.
This gives rise to Galois representations of type~$\GL_2$
and level $N = mp$.  As an application, Cowan~\cite{Cowan22}
exploits Mestre's method with sieving to enumerate
newforms associated to low dimensional isogeny factors of
the Jacobian $J_0(N)$ of $X_0(N)$.
In particular, this permits one to gather statistics for
the distribution of elliptic curves and low dimensional
modular abelian varieties of prime level, or nearly prime
level.

The Weber and Fermat modular curves permit one to study modular
isogeny factors of levels with higher powers of $2$ and $3$.
A general framework for isogeny graphs with level structure
is developed in~\cite{ModSS}.
For prime level, Bennett, Gherga, and Rechnitzer~\cite{BGR19}
have enumerated all elliptic curves of prime conductor up
to $2 \cdot 10^9$, but datasets for elliptic curves of composite
level are less complete.  In particular, the higher powers
of~$2$ and~$3$ imply that the supersingular modules associated
to moderately large primes rapidly exceed conductor bounds in
typical datasets such as the LMFDB~\cite{LMFDB}.
In the following table, we give the numbers of isogeny classes
of pseudo elliptic factors coming from the Weber curves $\cX_n$,
where $n$ divides $12$, or Fermat curves $\cY_n$, where $n$
divides $8$.  By pseudo elliptic factors, we refer to an
eigenspace on which the Hecke operators away from $6p$ act
by scalar multiplication in $\ZZ$.

\begin{table}[h!]
\caption{Counts of Weber and Fermat pseudo elliptic factors}
\begin{tabular}{@{}r|*{10}{c|}}
\multicolumn{2}{c|@{}}{$B-1000 < p < B$}
                     & 1000 & 2000 & 3000 & 4000 & 5000 & 6000 & 7000 & 8000 & $N = mp$ \\ \hline
  \# Weber :    & \multirow{2}{*}{$\cX_{12}$}
                     &  160 &  104 &   80 &  132 &   92 &   60 &   84 &   68 & \multirow{2}{*}{$288 \cdot p$} \\
  \# orbits :   &    &   40 &   26 &   20 &   22 &   23 &   15 &   21 &   17 & \\ \hline
  \# Weber :    & \multirow{2}{*}{$\cX_6$}
                     &  212 &  132 &   98 &   78 &  106 &   92 &   80 &   76 & \multirow{2}{*}{$72 \cdot p$} \\
  \# orbits :   &    &  106 &   66 &   49 &   39 &   53 &   46 &   40 &   38 & \\ \hline
  \# Weber :    & \multirow{2}{*}{$\cX_3$}
                     &  450 &  288 &  222 &  212 &  262 &  208 &  206 &  194 & \multirow{2}{*}{$18 \cdot p$} \\
  \# orbits :   &    &  225 &  144 &  111 &  106 &  131 &  104 &  103 &   97 & \\ \hline
  \# Weber :    & \multirow{2}{*}{$\cX_4$}
                     &  142 &  110 &   80 &   66 &   48 &   60 &   56 &   56 & \multirow{2}{*}{$32 \cdot p$} \\
  \# orbits :   &    &   71 &   55 &   40 &   33 &   24 &   30 &   28 &   28 & \\ \hline
  \# Weber :    & $\cX_2$
                     &  174 &   99 &   91 &   79 &   94 &   84 &   90 &   63 & $8 \cdot p$ \\ \hline
  \# Weber :    & $\cX_1$
                     &  274 &  256 &  184 &  194 &  175 &  186 &  155 &  126 & $2 \cdot p$ \\ \hline
  \# Fermat :   & \multirow{2}{*}{$\cY_8$}
                     &  100 &   60 &   32 &   32 &   72 &   24 &   24 &   24 & \multirow{2}{*}{$256 \cdot p$} \\
  \# orbits :   &    &   25 &   15 &    8 &    8 &   18 &    6 &    6 &    6 & \\ \hline
  \# Fermat :   &  \multirow{2}{*}{$\cY_4$}
                     &  974 &  538 &  538 &  544 &  502 &  498 &  396 &  336 & \multirow{2}{*}{$64 \cdot p$} \\
  \# orbits :   &    &  487 &  269 &  269 &  272 &  251 &  249 &  298 &  168 & \\ \hline
  \# Fermat :   & $\cY_2$
                     &  545 &  364 &  309 &  287 &  281 &  279 &  248 &  193 & $16 \cdot p$ \\ \hline
  \# Fermat :   & $\cY_1$
                     &  96  &   60 &   48 &   39 &   37 &  39  &   34 &   26 & $4 \cdot p$ \\ \hline
  \# Level 1 :  & $X(1)$
                     &   69 &   41 &  34  &   35 &   25 &   26 &   21 &   22 & $p$ \\ \hline
  \# primes :   & $\ZZ$
                     & 168  &  135 &  127 &  120 &  119 &  114 &  117 &  107 & $1$ \\  \hline
\end{tabular}
\label{table:pseudo-elliptic}
\end{table}

Table~\ref{table:pseudo-elliptic} collects the counts of pseudo elliptic factors
associated to the supersingular Hecke modules on the Weber curves $\cX_n$, for
$n$ dividing $12$ and the Fermat curves $\cF_n$, for $n$ dividing $8$.
When the factors appear in orbits of twists with respect to $\QQ(\zeta_n)$,
we report in a second line the number of such orbits (dividing the counts
by $2$ or $4$).

The full Weber curve $\cW_{24}$ admits a level $48$, equipped with an action
of Galois group of $\QQ(\zeta_{48})$, which is not of exponent $2$, and we
observe that every modular isogeny factor associated to the Weber curves
$\cX_{24}$ and $\cX_{8}$, has even degree Hecke algebra, and in particular,
gives rise to no pseudo elliptic factors.  More specifically, the Hecke
algebra systematically contains the quadratic field $\QQ(\sqrt{2})$.

Although this represents a limited dataset, the numbers of such orbits for
$\cX_{12}$, of level $288p$, aligns roughly with the numbers of isogeny
classes of elliptic curves of conductor $p$, while the numbers of orbits
for $\cY_{8}$, of level $256p$, appears to be smaller by a factor of four
in this range.
Each of the curves of lower levels give rise to a much larger number of
pseudo elliptic factors of levels $mp$ compared to the numbers of elliptic
curves of prime level $p$, for $p$ in the same range.  The vast majority
of the pseudo elliptic factors are in fact isogeny classes of modular
elliptic curves.  The high powers of $2$ and $3$ in the cofactor $m$
imply that we are able to enumerate data for isogeny classes of elliptic
curves of conductor exceeding typical bounds on the level in datasets
such as the LMFDB~\cite{LMFDB}.  In particular the levels attained
for primes $p$ up to $8000$ gives levels $288p$ or $256p$ exceeding
$2 \times 10^6$, which is beyond what has been systematically treated
for composite levels.

\section{Conclusion}

The Weber modular curves $\cW_{24}$, and their Fermat modular quotients $\cF_8$,
present highly symmetric models for modular curves of level $48$ and $16$, whose
quotients $\cX_{24}$ and $\cY_{8}$ give high-degree covers of the $j$-line.
This high degree and the symmetry properties make them practical for computation
of isogenies.  In particular the small coefficient size and sparseness provide
both elegant and efficient isogeny relations.  While not all curves admit a Weber
structure, we show that the supersingular points on $\cW_{24}$ are all defined
over $\FF_{p^2}$. This makes the Weber curves interesting for the local study
and navigation of supersingular isogeny graphs, with application to modern
isogeny-based cryptographic protocols.  On the other hand, the $\ell$-isogeny
graphs also play an important role in the theory of modular forms and Galois
representations, coming from supersingular Hecke modules.
In particular, sieving for the pseudo elliptic factors of such modules shows
that one can compute data of elliptic modular curves of conductor beyond what
can be systematically computed for arbitrary level, and can be used to
understand the distribution of such curves with admitting multiplicative
reduction at a large prime and whose conductor is simultaneously divisible
by a large power of~$2$ and~$3$.

\section*{Appendix}

\section*{Isomorphisms between Weber and Fermat curves}
  
The isomorphisms $\cW_n \longrightarrow \cF_n$ and their inverses $\cF_n \longrightarrow \cW_n$
are given as follows:
$$
\begin{tikzcd}[row sep=-2mm,column sep=8mm]
  \cW_8: \left\{
  \begin{array}{@{}c@{}}
    X_0^8 + X_1^8 + X_2^8 = 48 X_3^8,\\
    X_0 X_1 X_2 = \sqrt{8} X_3^3
  \end{array} \right. \arrow[r] & \cF_8 : X^8 + Y^8 + Z^8 = 0\\[1mm]
  (u_0:u_1:u_2:u_3) \arrow[r,mapsto] &
  (\sqrt{2} u_2^5 u_3 - u_0^3 u_1^3 : u_1^6 - 2 u_0^2 u_2^2 u_3^2 : \sqrt{2} u_0^5 u_3 - u_1^3 u_2^3)\\[1mm]
  (\sqrt{2}s_0^3:\sqrt{2}s_1^3:\sqrt{2}s_2^3:s_0s_1s_2) & \arrow[l,mapsto] (s_0:s_1:s_2)
\end{tikzcd}
$$
$$
\begin{tikzcd}[row sep=-2mm,column sep=8mm]
  \cW_4: \left\{
  \begin{array}{@{}c@{}}
    X_0^4 + X_1^4 + X_2^4 = 48 X_3^4,\\
    X_0 X_1 X_2 = 8 X_3^3
  \end{array} \right. \arrow[r] & \cF_4 : X^4 + Y^4 + Z^4 = 0\\[1mm]
  (u_0:u_1:u_2:u_3) \arrow[r,mapsto] & (u_0^3 - 2u_1 u_2 u_3 : u_1^3 - 2u_0 u_2 u_3 : u_2^3 - 2u_0 u_1 u_3)\\[1mm]
  (2s_0^3:2s_1^3:2s_2^3:s_0s_1s_2) & \arrow[l,mapsto] (s_0:s_1:s_2)
\end{tikzcd}
$$
$$
\begin{tikzcd}[row sep=-2mm,column sep=8mm]
  \cW_2: \left\{
  \begin{array}{@{}c@{}}
    X_0^2 + X_1^2 + X_2^2 = 48 X_3^2,\\
    X_0 X_1 X_2 = 64 X_3^3
  \end{array} \right. \arrow[r] & \cF_2 : X^2 + Y^2 + Z^2 = 0\\[1mm]
  (u_0:u_1:u_2:u_3) \arrow[r,mapsto] &
  \left\{\begin{array}{@{\;}l}
  (-u_0^2 + 16 u_3^2: u_0 u_1 - 4 u_2 u_3: u_0 u_2 - 4 u_2 u_3)\\
  (u_0 u_1 - 4 u_2 u_3: -u_1^2 - 16 u_3^2: u_1 u_2 - 4 u_0 u_3)\\
  (u_0 u_2 - 4 u_1 u_3: u_1 u_2 - 4 u_0 u_3: -u_2^2 + 16 u_3^2)
  \end{array}\right.\\[1mm]
  (4s_0^3:4s_1^3:4s_2^3:s_0s_1s_2) & \arrow[l,mapsto] (s_0:s_1:s_2)
\end{tikzcd}
$$
$$
\begin{tikzcd}[row sep=-2mm,column sep=8mm]
  \cW_1: \left\{
  \begin{array}{@{}c@{}}
    X_0 + X_1 + X_2 = 48 X_3^2,\\
    X_0 X_1 X_2 = 4096 X_3^3
  \end{array} \right. \arrow[r] & \cF_1 : X + Y + Z = 0\\[1mm]
  (u_0:u_1:u_2:u_3) \arrow[r,mapsto] & (u_0-16u_3 : u_1-16u_3 : u_2-16u_3)\\[1mm]
  (16s_0^3:16s_1^3:16s_2^3:s_0s_1s_2) & \arrow[l,mapsto] (s_0:s_1:s_2)
\end{tikzcd}
$$

\section*{Elliptic curves over twisted Fermat curves}

Let $\cF_n^t: s^n + t^n = 1$ be the twist by $\zeta_{2n}$ of the Fermat curve
$\cF_n: s^n + t^n + 1 = 0$.  The symmetry of the Fermat modular curve simplifies
the study of automophisms, but the $2$-isogenies extend more naturally along
a prescribed chain by breaking the $S_3$-symmetry by this twist.
We revisit the $2$-isogeny chains of Section~\ref{EllipticSurfaces} in terms
of this twisted Fermat curve, in relation to the following $-1$-twist of the
family of elliptic curves:
$$
\cC_0^t/\cF_n^t: y^2 = x(x + 1)(x + t^n).
$$
As above, we set $t = t_0$ and $t_{i-1} = t_{i}^2$ for $0 \le i \le 3$, such
that for a field $K$,
$$
K(\cY_1) = K(t_0) \subset K(\cY_2) = K(t_1) \subset K(\cY_4) = K(t_2) \subset K(\cY_{8}) = K(t_3)
$$
is a sequence of quadratic extensions (relative to the previous functions
$t_i$, these generators are scaled by $\zeta_{2n}^{2^i}$).
Over $K(\cY_8)$ we can define a chain of $2$-isogenies starting from $\cC_0$,
$$
\begin{tikzcd}
  \cC_0^t \arrow[r,"\phi_0"] & \cC_1^t \arrow[r,"\phi_1"] & \cC_2^t \arrow[r,"\phi_2"] & \cC_3^t
\end{tikzcd}
$$
normalized such that the isogenies are defined successive quotients by a $2$-torsion
point $(0,0)$ gives cyclic extension over the base curve $\cY_8$.
On $\cC_k$ we give $T_k$, for $k$ in $\{0,1\}$ such that $2T_k = (0,0)$ and $\phi_k(T_k) = (0,0)$
on the codomain $\cC_{k+1}$.
$$
\begin{tikzcd}[row sep=0mm]
  \phi_0: \cC_0^t: y^2 = x(x + 1)(x + t_0) \arrow[r] & \cC_1^t: y^2 = x(x + 4c_0)(x + e_0^2)\\
  (x,y) \arrow[r,mapsto] & \dsp\left(\frac{(x-c_0)^2}{x}, \frac{x^2-c_0^2}{x^2}y\right)
\end{tikzcd}
$$
where $c_0 = t_1$ and $e_0 = t_1+1$. 
The point $T_0 = (c_0,c_0 e_0)$ satisfies $2T_0 = (0,0) \in \cC_0^t[2]$ and $\phi_0(T_0) = (0,0) \in \cC_1^t[2]$.
Moreover, the image of $\cC_0^t[2]$ is generated by
$
\phi_0\big((1,0)\big) = \phi_0\big((t_1^2,0)\big) = \big((t_1-1)^2,0\big).
$
$$
\begin{tikzcd}[row sep=0mm]
  \phi_1: \cC_1^t: y^2 = x(x + 4c_0)(x + e_0^2) \arrow[r] & \cC_2^t: y^2 = x(x + 4c_1)(x + e_1^2)\\
  (x,y) \arrow[r,mapsto] & \dsp\left(\frac{(x-c_1)^2}{x}, \frac{x^2-c_1^2}{x^2}y\right)
\end{tikzcd}
$$
where $c_1 = 2t_2e_0$ and $e_1 = (t_2+1)^2$. The point $T_1 = (c_1, c_1e_1^2)$ satisfies $2T_1 = (0,0) \in \cC_1^t[2]$
and $\phi_1(T_1) = (0,0) \in \cC_2^t[2]$.
Finally, the quotient by $(0,0)$ in $\cC_2^t$ gives the $2$-isogeny:
$$
\begin{tikzcd}[row sep=0mm]
  \phi_2: \cC_2^t: y^2 = x(x + 4c_1)(x + e_1^2) \arrow[r] & \cC_3^t: y^2 = x(x + 4c_2)(x + e_2^2)
  \\
  (x,y) \arrow[r,mapsto] & \dsp\left(\frac{x^2+c_2^2}{x}, \frac{x^2-c_2^2}{x^2}y\right)
\end{tikzcd}
$$
where $c_2 = u_3e_1^2$ and $e_2 = t_3 u_3 + e_1$ where $u_3^2 = 8e_0$.

After renormalization, these curves define the sequence of Legendre invariants:
$$
\big(\lambda_0,\lambda_1,\lambda_2,\lambda_3\big)
= \left(t_0, \frac{e_0^2}{4c_0}, \frac{e_1^2}{4c_1}, \frac{e_2^2}{4c_2} \right)\ccomma
$$
satisfying the modular equation
$$
\lambda_{i+1}(\lambda_{i+1} - 1) = \frac{(\lambda_i - 1)^2}{16\lambda_i}.
$$


\end{document}